\title{Modeling of cardiac fibers as oriented liquid crystals}
\author{Nicol\'as A Barnafi, Axel Osses}
\date{}
\newcommand{\Base}{{\texttt{base}}}
\newcommand{\Endo}{{\texttt{endo}}}
\newcommand{\Epi}{{\texttt{epi}}}
\newcommand{\Trans}{{\texttt{trans}}}
\newcommand{\AB}{{\texttt{ab}}}
\renewcommand{\vec}{\vectorsym}
\newcommand{\mat}{\matrixsym}
\newcommand{\ten}{\tensorsym}
\DeclareMathOperator{\grad}{\nabla}
\DeclareMathOperator{\dive}{\text{div}}
\DeclareMathOperator{\curl}{\text{curl}}
\newtheorem{remark}{Remark}
\newtheorem{definition}{Definition}
\newcommand{\sinb}[1]{\sin{(#1)}}
\newcommand{\cosb}[1]{\cos{(#1)}}
\newcommand{\sinsq}[1]{\sin^2{(#1)}}
\newcommand{\da}{[\vec d_\Trans]}
\newcommand{\db}{[\vec d_\AB]}
\newcommand{\tin}{\text{in}}
\newcommand{\ton}{\text{on}}
\newtheorem{lemma}{Lemma}
\definecolor{codegreen}{rgb}{0,0.6,0}
\definecolor{codegray}{rgb}{0.5,0.5,0.5}
\definecolor{codepurple}{rgb}{0.58,0,0.82}
\definecolor{backcolour}{rgb}{0.95,0.95,0.92}
\lstdefinestyle{mystyle}{
  backgroundcolor=\color{backcolour}, commentstyle=\color{codegreen},
  keywordstyle=\color{magenta},
  numberstyle=\tiny\color{codegray},
  stringstyle=\color{codepurple},
  basicstyle=\ttfamily\footnotesize,
  breakatwhitespace=false,         
  captionpos=b,                    
  keepspaces=true,                 
  numbers=none,                    
  numbersep=5pt,                  
  showspaces=false,                
  showstringspaces=false,
  showtabs=false,                  
  tabsize=2,
  framerule=1.5pt,
  rulecolor=\color{red!60!black}
}
\begin{document}

\maketitle

\begin{abstract}
In this work we propose a mathematical model that describes the orientation of ventricular cardiac fibers. These fibers are commonly computed as the normalized gradient of certain harmonic potentials, so our work consisted in finding the equations that such a vector field satisfies, considering the unitary norm constraint. The resulting equations belong to the Frank-Oseen theory of nematic liquid crystals, which yield a bulk of mathematical properties to the cardiac fibers, such as the characterization of singularities. The numerical methods available in literature are computationally expensive and not sufficiently robust for the complex geometries obtained from the human heart, so we also propose a preconditioned projected gradient descent scheme that circumvents these difficulties in the tested scenarios. The resulting model further confirms recent experimental observations of liquid crystal behavior of soft tissue, and provides an accurate mathematical description of such behavior. 
\end{abstract}
\section{Introduction}
The heart is the blood pump of the body, and it works throughout our entire lifespan. Surgical procedures involving the heart are very invasive because it resides within the rib cage, so it is fundamental to find alternative ways to diagnose, plan and treat it. The progress in computational resources has enabled the use of increasingly complex mathematical models to describe a patient-specific heart (see \cite{AShortHistoryNieder2019} for a review), which encompasses complex mechanisms at many different scales: ion dynamics at cell membranes, electric potential propagation within the myocardium (cardiac muscle tissue), muscle contraction and then blood circulation thanks to the synchronized action of all of these elements \cite{tortora2018principles}. One fundamental physiological property of the heart is that the myocardium is anisotropic \cite{EffectOfTissuRobert1982}, with the anisotropy given by an ordered set of fibers that impact many of the heart's mechanisms, such as deformation and electric conductivity. An adequate description of cardiac fibers is thus at the core of any mathematical model of the heart.

The fibers were initially computed by means of an imaging technique known as Diffusion Tensor Magnetic Resonance Imaging (DT-MRI), which is very expensive and difficult for \emph{in-vivo} patients \cite{HeartMuscleFiZhukovNone}. To alleviate these costs, mathematical models known as Rule Based Models (RBM) were proposed \cite{FiberOrientatiStreet1969}. They are in good agreement with measurements of fiber orientation in the ventricles \cite{ANovelRuleBaBayer2012}, whereas atrial RBMs are still an active area of research \cite{ModelingAtrialKruege2011,ModelingCardiaPiersa2021,AnAutomatePipZheng2021}. This has motivated other areas of research for atrial fiber generation, mainly through data-assimilation \cite{ATechniqueForRoney2019,PhysicsInformeRuizH2022}.

RBMs consist in the manipulation of a series of harmonic potentials with different boundary conditions, i.e. solutions of the Laplace equation, whose gradients are normalized and then combined with histologically observed rotations to obtain an orthonormal basis oriented along the fibers. Despite the differences of the existing approaches \cite{ThermodynamicalRossi2014,GeneratingFibrWong2014,ANovelRuleBaBayer2012,ARuleBasedMeDoste2019}, they follow similar steps as shown in \cite{ModelingCardiaPiersa2021}:

\begin{enumerate}
    \item Provide adequate labels for the mesh geometry: endocardium (inner wall), epicardium (external wall), base (top cut), and the apex (bottom point).
    \item Compute a distance from the endocardium to the epicardium, known as transmural distance. Its gradient yields the transmural direction.
    \item Compute a direction going form the apex to the base, orthogonal to the transmural direction, known as the apicobasal direction.
    \item Define a local coordinate system by complementing the transmural and apicobasal directions with a longitudinal (or transversal) direction. 
    \item Rotate the computed reference frame to match histological observations and finally yield a fiber direction $\vec f$, a cross-fiber direction $\vec n$ and a sheet direction $\vec s$.
\end{enumerate}

Even though these steps are clear and well-defined, there are some limitations hidden within them that are intrinsic to all RBM formulations. These are:

\begin{itemize}
    \item The fibers are computed as the gradients of harmonic functions. Numerically, this means that the resulting fiber field could be discontinuous or inaccurate. This is most evident in thin muscle walls such as the right ventricle and the atria.
    \item There is no known system of equations that the fibers satisfy. This makes it difficult to mathematically analyse the qualitative behavior of fiber orientation. 
    \item The concept of unit-vector interpolation is fundamental. In practice, two different rotations of the longitudinal direction are required on the endocardium and epicardium, so that different boundary rotations, considered correct, are then interpolated within the tissue using a weight such as the transmural distance. The situation is more difficult with the biventricle scenario, because a choice has to be made regarding the combination of the fiber fields in the intra-ventricular septum (IVS). This problem was circumvented in \cite{ANovelRuleBaBayer2012} by interpreting the orientation as a quaternion and using well-established techniques from computer vision for unit-quaternion interpolation \cite{AnimatingRotatShoema1985} to obtain an inter-ventricular interpolation.
\end{itemize}

The third point is the one that has received most attention, whereas the first two are largely unaddressed. Our goal is to tackle these three problems together, which we do by computing a Partial Differential Equation (PDE) satisfied by the fibers. The resulting model is a particular case of a nematic liquid crystal described by the Frank-Oseen theory (see \cite{LiquidCrystalsBall2017} for further references), given by the minimization of the $H_0^1$ norm of the vector field, subject to having unitary norm throughout the domain. This is also known in the literature as harmonic maps. The connection of liquid crystals with living tissue, even though we derived it from a purely mathematical approach, has already been observed in literature \cite{LiquidCrystalsHirst2017}, and only very recently for cardiac fibers \cite{TheNematicChiAuriau2022}. This grants a significant validation not only of our model, but also of the RBMs in general. Beyond Frank-Oseen, there is the Ericksen model that allows for better approximation of singularities by means of an additional variable, and while both the Frank-Oseen and Ericksen theories consider only uniaxial liquid crystals, more general scenarios are better described within the Landau-De Gennes framework. See \cite{LandauDeGenneMajumd2010,LiquidCrystalsBall2017} for further details on these topics.

The existence of solutions to the Frank-Oseen equations and their regularity is well-known for most cases \cite{NonlinearTheorLinF1989}. Additionally, work has been devoted to the numerical approximation of these equations, mainly through the study of the saddle point problem arising from the first order conditions \cite{ASaddlePointHuQi2009,ConstrainedOptAdler2016,BlockPreconditBeik2018,AugmentedLagraXiaJ2021}.  In liquid crystal theory, the geometries considered so far are simple. This means that the existing numerical solvers have not been tested for robustness in complex geometries. In fact, we will show that they fail under such conditions, so we propose a preconditioned projected gradient descent scheme that is robust and optimal in all our tests, even in the presence of singularities. 

This work is structured as follows: In Section \ref{section:model} we derive our proposed PDE for an abstract vector field and highlight its main properties for our application. After motivating the use of the Frank-Oseen model for cardiac fibers, we thoroughly characterize all vector fields used in RBMs as nematic liquid crystals. We conclude this section by showing that many important physical properties can be shown analytically, such as the vector interpolation property, and the arisal of the apex singularity. In Section \ref{section:methodology} we show how a non-standard Dirichlet condition from our model is implemented in practice, and then present our preconditioned projected gradient descent strategy to solve the proposed model. We show numerically that it is optimal and more robust than what has been proposed in the literature. In Section \ref{section:application} we numerically study our approach by doing the following tests: (i) a comparison of the use of our approach as a replacement of a potential based fiber field, (ii) a numerical computation of the possible deviation of the fiber field from being a nematic liquid crystal, (iii) a convergence study where we verify the validity of the Aubin-Nitsche trick for our model, (iv) a test where we compare our approach with the standard one in a slender wall scenario, and (v) a simple contraction test where we study the impact of using our model in a mechanical simulation. In Section \ref{section:singularities} we review the relevant theoretical aspects of topological defects in liquid crystals and how they are be applied to the cardiac fiber context. Finally, in Section \ref{section:discussion} we conclude our work and discuss possible future directions. 

\subsection*{Notations}
Let us consider an open connected and Lipschitz set $\Omega\subset \mathbb R^3$ with boundary $\partial\Omega$, together with the classical Sobolev spaces \cite{Evans2022partial} of functions $f:\Omega\to \mathbb R$ that are square-integrable and that have a square-integrable gradient, denoted  $L^2(\Omega)$ and $H^1(\Omega)$ respectively, with the classical norms $\|\cdot\|_{L^2(\Omega, X)}$ and $\|\cdot\|_{H^1(\Omega, X)}$. We use different fonts for scalars, vectors, matrices, and tensors as $a$, $\vec a$, $\mat A$, and $\ten A$, with their Frobenius norm written as $|\cdot|$. All PDEs are understood weakly as posed in the dual space $\left( H^1(\Omega)\right)'$, or in another suitable space according to the boundary conditions. Dirichlet boundary conditions are defined using the trace operator $\gamma_D:H^1(\Omega)\to H^{1/2}(\partial\Omega)$, and Neumann boundary conditions using the normal derivative trace operator $\gamma_N:\vec H^1(\Omega)\to H^{-1/2}(\partial\Omega)$, given formally by $\Gamma_N \vec u= \grad \vec u\cdot \vec N$, where $\vec N$ stands for the unit outwards normal of the domain $\Omega$. We denote the space of functions satisfying the boundary condition $u=g$ in a subset $\Gamma_D$ of the boundary by  $ H^1_{g}(\Omega) \coloneqq \{u \in H^1(\Omega): \gamma_D u = g \,\text{on}\,\Gamma_D\},$ with $g$ sufficiently regular and $\Gamma_D$ understood from the context to avoid excessive notation. The gradient of a tensor is defined as $(\grad u^{kl})_i = \partial_{x_i} u^{kl}$, and the divergence of a tensor is understood row-wise, i.e. $(\dive \ten A)_i = \sum_j \partial_{x_j} A_{ij}$, with the Laplace operator given by $\Delta \coloneqq \dive \grad$. The outer product between two vectors is given by $\vec a \otimes \vec b= \vec a \vec b^T$. The Gateaux derivative of a functional $\Theta$ at point $\vec u$ in direction $\vec v$ is denoted by $d\Theta(\vec u)[\vec v]$. Finally, Computations will be performed using Einstein's index notation, which establishes that summations are implied by repeated indexes, meaning that the following identities hold: $ \vec a\cdot \vec b = a_i b_i, (\mat A\mat B)_{ij}= A_{ik}B_{kj} $. It is customary to drop the bold symbol of the vector of matrix when referring to its components, i.e. $(\mat A)_{ij}=A_{ij}$.
\section{The fiber model}\label{section:model}

In this section we derive the proposed model by looking at each of the potentials computed in an RBM model and their corresponding vector field. We conclude this section by showing some analytical properties of the model, which are fundamental for its use as an RBM alternative. We will focus only on a left ventricle geometry, as shown in Figure \ref{fig:lv-geometry}. This geometry $\Omega$ has its boundary $\Gamma \coloneqq \partial \Omega$ divided into the endocardium  $\Gamma_\Endo$, the epicardium $\Gamma_\Epi$ and the base $\Gamma_\Base$ such that $\overline{\Gamma} = \overline{\Gamma_\Endo}\cup \overline{\Gamma_\Epi}\cup\overline{\Gamma_\Base}$. We have not considered in this partition the apex point $\Gamma_\texttt{apex}$, defined as the bottom point of the left ventricle, which is fundamental to induce the singularity present on it.

\begin{figure}
    \centering
    \includegraphics[width=0.4\textwidth]{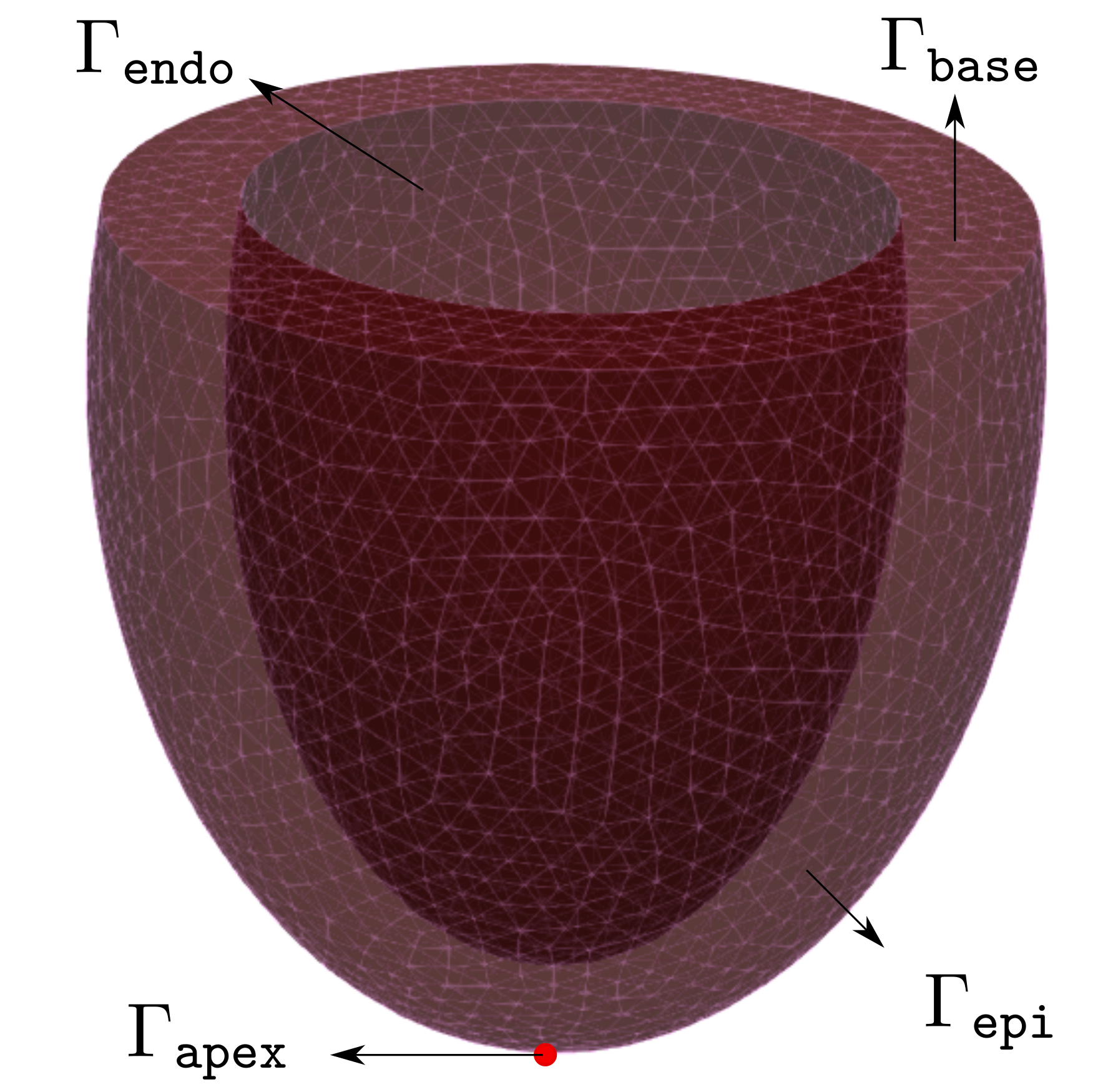}
    \caption{Geometry used for an idealized left ventricle geometry. The boundary tags are given by the endocardium (inner wall), epicardium (outer wall), base (top cut) and the apex (bottom point).}
    \label{fig:lv-geometry}
\end{figure}

\subsection{The transmural and apicobasal vectors}
The main ingredients for computing the fiber vector field are the transmural and apicobasal vectors, both computed as the normalized gradient of harmonic potentials. We represent both problems by the following abstract Poisson problem: Find $\phi$ in $H^1(\Omega)$ such that

\begin{equation}\label{eq:poisson-scalar}
    \begin{aligned}
        -\Delta \phi  &= 0 && \tin \,\Omega, \\
        \gamma_D \phi &= g_D && \ton \, \Gamma_D,\\
        \gamma_N \phi &= 0 && \ton \, \Gamma_N,\\
    \end{aligned}
\end{equation}
where $g_D$ is constant in each connected component of $\Gamma_D$. The steps to compute the transmural and apicobasal vectors are two, given by 

    \begin{itemize}
        \item[]Step 1: Compute the potential $\phi$ by solving \eqref{eq:poisson-scalar} using adequate boundary conditions. For the transmural potential $\phi_\Trans$, this means taking the values 1 on the epicardium and 0 on the endocardium. The apicobasal potential $\phi_\AB$ instead is 0 on the apex and 1 on the base. All remaining boundary conditions are homogeneous Neumann. 
        \item[]Step 2: Compute the normalized gradient as $\vec d^\phi\coloneqq \Pi_{\mathbb S^2}(\grad \phi)$, where $\Pi_{\mathbb S^2}$ stands for the projection into $\mathbb S^2\coloneqq \{\vec y: |\vec y|=1\,\text{ a.e. in $\Omega$}\}$.
    \end{itemize}
\begin{remark}
    A very important aspect of step 2 is that the projection is not defined for $\vec y=\vec 0$. As we will discuss further ahead, one possible solution is to consider a smoothed projection such that $\Pi_{\mathbb S^2}\vec 0 = \vec 0$. The simplest solution in practice is to approximate such projection with 
    $$ \Pi_{\mathbb S^2}^\epsilon \coloneqq \frac{\vec y}{\epsilon + |\vec y|}, $$
    where $\epsilon$ is a small number.
\end{remark}

We highlight that these steps do not have a clear mathematical objective beyond the well-established practical application. To see this, we first note that \eqref{eq:poisson-scalar} is equivalent to solving the minimization problem
    $$ \min_{\varphi \in H_{\Gamma_D}^1(\Omega)} \Psi(\grad \varphi) \coloneqq \frac 1 2 \int_\Omega |\grad \varphi|^2\,dx. $$
In step 2 the gradient is normalized, so that the energy becomes constant:
    $$ \Psi\left(\frac{\grad \varphi}{|\grad \varphi|}\right) = \frac 1 2 \int_\Omega \left|\frac 1 {|\grad \varphi|} \grad \varphi\right|^2\,dx = \frac{|\Omega|}{2}. $$
Still, from \eqref{eq:poisson-scalar} we can take the gradient of the PDE, and using the vector calculus identities $\grad \dive = \dive \grad + \curl \curl$ and $\curl \grad=0$ we obtain that each component of $\grad \phi$ is also harmonic:
    $$ \dive \grad (\grad \phi) = \grad \dive (\grad \phi) - \curl \curl \grad \phi = 0.$$
This means that $\grad \phi$ is the minimizer of the following minimization problem:
    $$ \min_{\vec y\in \vec H_{\Gamma^{\vec d}}^1(\Omega)} \frac 1 2 \int_\Omega |\grad \vec y|^2\,dx, $$
where $\Gamma^{\vec d}$ represents a boundary condition yet to be defined, which we characterize in Section \ref{section:bcs}.  Motivated by the previous computations, we can rewrite the previous steps in terms of $\vec d^\phi$ as:

    \begin{itemize}
        \item[]Step 1': Compute the vector field $\vec d$ that solves the following minimization problem:
            $$ \min_{\vec y\in \vec H_{\Gamma^{\vec d}}^1(\Omega)} \frac 1 2 \int_\Omega |\grad \vec y|^2\,dx.$$
        \item[]Step 2': Compute the normalized vector field as $\vec d^\phi\coloneqq \Pi^\epsilon_{\mathbb S^2}(\vec d)$.
    \end{itemize}
In this formulation, steps 1' and 2' are no longer incompatible. Instead, they can be seen as the first iteration of a projected gradient descent for the following problem:
    
    \begin{equation}\label{eq:fiber-min}
        \min_{\vec d\in \vec H_{\Gamma^{\vec d}}^1(\Omega)\cap \mathbb S^2} \frac 1 2 \int_\Omega |\grad \vec d|^2\,dx.
    \end{equation}
This can be regarded as \emph{the} minimization principle associated to the transmural and apicobasal vector fields, which we have derived simply by completing the already established procedure for computing them. The Euler-Lagrange equations associated to problem \eqref{eq:fiber-min} are given by finding a vector field $\vec d$ in $\vec H_{\Gamma^{\vec d}}^1(\Omega)$ and a Lagrange multiplier $\lambda$ in $L^2(\Omega)$ such that
    
\begin{equation}\label{eq:fiber-saddle}
    \begin{aligned}
        -\Delta \vec d + 2\lambda \vec d &= \vec 0 &&\tin\, \Omega,\\
        \vec d\cdot \vec d \qquad &=1&&\tin\,\Omega.
    \end{aligned}
\end{equation}
We note that the multiplier satisfies $\lambda = -\frac 1 2|\nabla \vec d|^2$ \cite{LiquidCrystalsBall2017}, which can be inferred from the balance equations. Interestingly, this problem coincides with the Frank-Oseen equations for nematic liquid crystals under the \emph{full anchoring} and \emph{one-constant} hypotheses \cite{LiquidCrystalsBall2017}. For the forthcoming analysis, we will make the following definition:

\begin{definition}
We say that a vector field $\vec d$ behaves as a nematic liquid crystal (NLC), or simply is a NLC, if it is a minimizer of \eqref{eq:fiber-min} or if it is a solution of system \eqref{eq:fiber-saddle} with $\lambda = -\frac 1 2 |\grad \vec d|^2$. This definition is independent of the boundary conditions.
\end{definition}

Naturally, the previous definition yields the following lemma: 

\begin{lemma}
The transmural vector $\vec d_\Trans\coloneqq \grad \phi_\Trans$ and the apicobasal vector $\vec d_\AB\coloneqq \grad \phi_\AB$ behave as nematic liquid crystals.
\end{lemma}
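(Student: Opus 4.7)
The proof is essentially a direct consequence of the computation displayed just above the definition of NLC. First, since $\phi_\Trans$ and $\phi_\AB$ both solve the scalar Poisson problem \eqref{eq:poisson-scalar}, each is harmonic on $\Omega$. Applying $\grad$ to $\Delta \phi = 0$ and exploiting the vector calculus identities $\grad \dive = \Delta + \curl \curl$ and $\curl \grad = 0$, one obtains $\Delta (\grad \phi) = 0$ componentwise. Hence $\vec d_\Trans$ and $\vec d_\AB$ are harmonic vector fields on $\Omega$.

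The next step is to match this to the NLC characterization. A componentwise harmonic vector field in $\vec H^1_{\Gamma^{\vec d}}(\Omega)$ is the unique minimizer of the Dirichlet energy $\tfrac 1 2 \int_\Omega |\grad \vec y|^2\,dx$ for the given boundary data, and that is exactly the functional appearing in \eqref{eq:fiber-min}. Equivalently, in terms of the saddle formulation \eqref{eq:fiber-saddle}, the pair $(\grad \phi, 0)$ satisfies the first Euler--Lagrange equation. By the Definition of NLC --- which requires being a minimizer of the associated energy or a solution of the saddle system --- this is enough for $\vec d_\Trans$ and $\vec d_\AB$ to qualify; the argument is identical for both potentials up to the choice of boundary data, which is deferred to Section \ref{section:bcs}.

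The point I would flag as the subtle step is that \eqref{eq:fiber-min} formally incorporates the pointwise constraint $|\vec d|=1$ and \eqref{eq:fiber-saddle} prescribes $\lambda = -\tfrac 1 2 |\grad \vec d|^2$, neither of which is literally met by $\grad \phi$ when $|\grad \phi|$ is not constant. I would resolve this by reading the lemma in the spirit of the paragraph on Steps 1' and 2' immediately preceding it: $\grad \phi$ is the output of Step 1' and hence is the first iterate of a projected gradient descent for \eqref{eq:fiber-min}, so the Frank--Oseen structure is already embedded in the unnormalized field. Making this identification precise --- whether to interpret the conclusion as a statement about the unnormalized fields $\grad \phi$ or to upgrade it to the normalized fields $\Pi^\epsilon_{\mathbb S^2}(\grad \phi)$ --- is the main conceptual choice in writing out the proof; a strict reading for the normalized field would require verifying the harmonic map equation directly, which does not follow from harmonicity of $\phi$ alone and would be the real technical hurdle.
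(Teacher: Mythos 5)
Your argument reproduces the paper's own reasoning: the lemma carries no separate proof and is presented as a direct consequence of the preceding derivation (componentwise harmonicity of $\grad \phi$ via $\curl\grad = 0$, the reinterpretation of normalization as one step of projected gradient descent, and the definition of an NLC as a minimizer of \eqref{eq:fiber-min}). The tension you flag is genuine and is not resolved in the paper either --- the unnormalized $\grad\phi$ violates the constraint $|\vec d|=1$ while the normalized field $\Pi^\epsilon_{\mathbb S^2}(\grad\phi)$ is not harmonic --- so the lemma must be read as holding by construction for $\vec d_\Trans$ and $\vec d_\AB$ redefined as minimizers of \eqref{eq:fiber-min-bcs} with the boundary data of Section~\ref{section:bcs}, which is exactly how they are used in the later lemmas where $|\vec d_\Trans|=|\vec d_\AB|=1$ is assumed.
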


In addition, we define a loaded Frank-Oseen model given by
    \begin{equation}\label{eq:fiber-saddle-loaded}
        \begin{aligned}
            -\Delta \vec d + 2\lambda \vec d &= \vec g &&\tin\, \Omega,\\
            \vec d \cdot \vec d &= 1 &&\tin\,\Omega.
        \end{aligned}
    \end{equation}

\begin{definition}
We say that a vector field $\vec d$ behaves as a loaded nematic liquid crystal (LNLC), or simply is a LNLC, if it is a solution of system \eqref{eq:fiber-saddle-loaded}, where $\lambda = \frac 1 2\left(\vec g\cdot \vec d - |\grad \vec d|^2\right)$. This definition is independent of the boundary conditions.
\end{definition}

\begin{remark}
If $ \vec d$ is a loaded nematic liquid crystal, then it is a minimizer of the following problem: 
    $$ \min_{\vec y\in\vec H^1_{\Gamma^{\vec d}(\Omega)\cap \mathbb S^2}} \frac 1 2 \int_\Omega |\grad \vec y|^2\,dx - \int_\Omega \vec g \cdot\vec y\,dx,$$
and replacing the loaded Lagrange multiplier gives the following equation:
    $$ -\Delta \vec d -|\grad \vec d|^2\vec d = (\ten I - \vec d \otimes \vec d)\vec g.$$
It holds in particular that if $\vec g\cdot \vec d=0$, then $\lambda = -\frac 1 2|\grad \vec d|^2$ as in the unloaded case.
\end{remark}
We conclude the presentation of the model by highlighting one additional advantage of considering the gradient of the potential as the primary variable. Typical heart geometries have slender walls in the atria and in the right ventricle, in many cases exhibiting a width of up to one mesh element (tetrahedron or hexahedron). In such scenarios, the gradient of a first order potential is given by a constant in the element, meaning that is will be impossible to depict any rotation through the tissue. Even if the wall presents two elements, such a coarse description of the geometry can lead to a severe lack of accuracy from the point of view of the discretization. In our formulation, if one element is used to describe the tissue walls, we can impose the boundary conditions exactly, so this problem will not be present.

\subsubsection{Transforming the boundary conditions}\label{section:bcs}
In this section we fully characterize the boundary conditions and the boundary $\Gamma^{\vec d}$. For this, we consider separately how to transform the Dirichlet and Neumann boundary conditions from \eqref{eq:poisson-scalar}. 

\paragraph{Neumann boundary conditions.} Neumann boundary conditions are given by 
        $$ \grad \phi \cdot \vec N = \vec d^\phi\cdot \vec N = 0,$$
        meaning that Neumann boundary conditions of the potential are translated into normal Dirichlet boundary conditions in the vector field. We write this condition, and the corresponding boundary as 
        $$ \vec d^\phi\cdot \vec N = 0, \quad\text{on $\Gamma^{\vec d}_{D, \vec N}$}. $$
        This boundary condition must be complemented with the behavior in the tangential direction. For this, we define the tangential projection as $\Pi_{\vec \tau}\coloneqq \ten I - \vec N\otimes \vec N$, which yields the boundary condition
        $$ \Pi_{\vec \tau}[\grad \vec d]\vec N = \vec 0\quad\text{on $\Gamma^{\vec d}_{D, \vec N}$}.$$
\begin{remark}
    One might wonder whether having a homogeneous tangential derivative is the correct choice. Indeed, we have chosen it only for simplicity. 
\end{remark}

\paragraph{Dirichlet boundary conditions.} In this case, we can derivate the boundary conditions to obtain a tangential representation of the gradient. To avoid introducing notation from differential geometry, we only look at one connected component of the Dirichlet boundary condition given by $\phi = g_D$ on $\Gamma_D$, and define the tangential vectors as $\vec \tau_1$ and $\vec \tau_2$. With them, we can formally compute the tangential derivative of the Dirichlet boundary condition as
        $$ \grad_{\vec \tau} \phi = \sum_i(\grad\phi\cdot \vec \tau_i)\vec \tau_i = \sum_i (\grad g_D \cdot \vec \tau_i)\vec \tau_i = \vec 0,$$
where the last equality is true because $g_D$ is constant. Because of this, we conclude that 
        $$ \grad_\tau \phi = \sum_i (\vec d\cdot \vec \tau_i)\vec\tau_i = (\ten I - \vec N\otimes \vec N)\vec d = 0, $$
which combined with the unitary norm constraint yields the following definition:
        $$ \vec d = \vec N, \quad \text{on $\Gamma^{\vec d}_{D, \vec \tau}$}.$$

\paragraph{The apex boundary condition.} The apicobasal function is fundamental for obtaining the apex singularity, which is done by imposing the boundary conditions $ \phi = 0$ on $\Gamma_\texttt{apex}$ and $\phi = 1$ on $\Gamma_\Base$.  The boundary condition on the apex is not theoretically sound, as it is imposed on a single point, but other other automated approaches have proven unsuccessful in patient-specific geometries. We thus transform this condition into another singular condition for the vector problem. Before normalization, we can restrict our analysis to a small ball around the apex $\vec x_0$, where the solution will be given by $ \vec d(\vec x) = \vec x - \vec x_0$.
To normalize this function, we consider a smoothed projector given by 
    $$ \Pi_\epsilon(\vec d)(\vec x) = \begin{cases} \vec d(\vec x)/|\vec d(\vec x)| & \vec x \in [B(\vec x_0, \epsilon)]^c \\ \alpha \vec d(\vec x) & \vec x \in B(\vec x_0, \epsilon) \end{cases}, $$
with $\alpha$ such that the function is continuous and $\epsilon$ sufficiently small. This projector can be well-defined in the discrete setting as well, it suffices to consider $\epsilon$ to be smaller than the smallest element edge on the mesh. Considering $\epsilon\to 0$ gives that the limit projector is given by
    $$ \Pi_\infty(\vec d)(\vec x)= \begin{cases} \vec d(\vec x)/|\vec d(\vec x)| & \vec x \neq \vec x_0 \\  \vec 0 & \vec x = \vec x_0 \end{cases}. $$
This suggests that the boundary condition on apex for the vector problem should be $ \vec d = \vec 0$ on $\Gamma_\texttt{apex}.$

\paragraph{The resulting model.} The resulting minimization problem for both the transmural and apicobasal vector fields is thus defined in the following spaces:
    $$ \vec H_{\Gamma_D^{\vec d}}^1(\Omega) \coloneqq \{\vec y \in \vec H^1(\Omega):\, \vec y = \vec N\quad\text{ on }\Gamma_{D, \vec \tau}^{\vec d}, \,\vec y\cdot\vec N = 0\quad\text{ on }\Gamma_{D,\vec N}^{\vec d}\}, \text{ and }$$
    $$ \vec H_{\Gamma_{D}^{\vec d}, 0}^1(\Omega) \coloneqq \{\vec y \in \vec H^1(\Omega):\, \Pi_{\vec \tau}\vec y = \vec 0\quad\text{ on }\Gamma_{D, \vec \tau}^{\vec d}, \, \vec y\cdot\vec N = 0\quad\text{ on }\Gamma_{D,\vec N}^{\vec d}\}.$$
\begin{remark}
    In the test function space, we have considered the condition $\Pi_{\vec \tau}\vec y=\vec 0$ instead of $\vec y = \vec 0$. This avoids technicalities where the test functions can not be embedded into $\mathbb S^2$.
\end{remark}

The resulting minimization problem is given as follows:

    \begin{equation}\label{eq:fiber-min-bcs}
        \min_{\vec d\in \mathcal V} \frac 1 2 \int_\Omega |\grad d|^2\,dx, 
    \end{equation}
where $\mathcal V$ is defined as
    $$ \mathcal V \coloneqq \vec H_{\Gamma_D^{\vec d}}^1(\Omega)\cap \mathbb S^2. $$
By using it, we can compute the transmural and apicobasal vectors, $\vec d_\Trans$ and $\vec d_\AB$, as minimizers of the Frank-Oseen problem, which completely characterizes their behavior as nematic liquid crystals. We do this numerically in Section \ref{section:application}.

\subsubsection{The transversal vector and the fiber field}
The transversal vector is given by $\vec d = \vec d_\Trans \times \vec d_\AB$.  The fiber field is computed by  means of the transmural distance by interpolating the angles that the fibers have with respect to the transmural direction. For this aim we consider a transmurally varying angle $\alpha(\phi_\Trans) = \phi_\Trans \alpha_\Endo + (1 - \phi_\Trans) \alpha_\Epi$ together with a change of basis and a rotation given by 
    $$ \mat B = [\vec d \,\,\vec d_\AB \,\,\vec d_\Trans], \quad \mat R(\phi_\Trans) = \begin{bmatrix}\cos \alpha(\phi_\Trans) & -\sin \alpha(\phi_\Trans) & 0 \\ \sin \alpha(\phi_\Trans) & \cos \alpha(\phi_\Trans) & 0 \\ 0 & 0 & 1 \end{bmatrix}. $$
The fiber field is ultimately defined as $\vec f(\phi_\Trans) \coloneqq \mat Q(\phi_\Trans) \vec d$, with $\mat Q(\phi_\Trans) = \mat B\mat R(\phi_\Trans)\mat B^T$. One may naturally wonder at this point if either $\vec d$ or $\vec f$ behave as nematic liquid crystals, and the answer is that they behave as loaded nematic liquid crystals. We show this in the following two lemmas.

\begin{lemma}
Consider two orthogonal nematic liquid crystals $\vec d_\Trans$ and $\vec d_\AB$, with Langrange multipliers from \eqref{eq:fiber-saddle} given by $\lambda_\Trans$ and $\lambda_\AB$ respectively. Then, the transversal vector field $\vec d$, defined as the product $\vec d=\vec d_\Trans\times \vec d_\AB$, behaves as a loaded nematic liquid crystal, where the external force is given by
    $$ \vec F = 2[\vec d_\Trans]_{k,j}[\vec d_\AB]_{\ell,j}\epsilon_{k\ell i} - 2\da_{m,j}\da_{n,j}\db_m\db_n. $$
In particular, $\vec F\cdot \vec d=0$.
\end{lemma}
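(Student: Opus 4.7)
The plan is to work entirely in index notation, writing $d_i = \epsilon_{ijk}\da_j\db_k$. Differentiating twice via Leibniz gives
\begin{equation*}
d_{i,\ell\ell} = \epsilon_{ijk}\bigl(\da_{j,\ell\ell}\db_k + 2\da_{j,\ell}\db_{k,\ell} + \da_j\db_{k,\ell\ell}\bigr).
\end{equation*}
Because $\vec d_\Trans$ and $\vec d_\AB$ are nematic liquid crystals, their Lagrange multipliers are $\lambda_\Trans = -\tfrac 1 2|\grad \vec d_\Trans|^2$ and $\lambda_\AB = -\tfrac 1 2|\grad \vec d_\AB|^2$, so $\Delta \vec d_\Trans = -|\grad \vec d_\Trans|^2 \vec d_\Trans$ and likewise for $\vec d_\AB$. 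Substituting collapses the outer terms and produces
\begin{equation*}
-(\Delta \vec d)_i = \bigl(|\grad \vec d_\Trans|^2 + |\grad \vec d_\AB|^2\bigr)d_i - 2\epsilon_{ijk}\da_{j,\ell}\db_{k,\ell},
\end{equation*}
which, after relabelling dummies via $\epsilon_{ijk}=\epsilon_{k\ell i}$, already exhibits the Levi-Civita piece of the claimed $\vec F$.

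The second ingredient is $|\grad \vec d|^2$. Using $d_{i,\ell} = \epsilon_{ijk}(\da_{j,\ell}\db_k + \da_j\db_{k,\ell})$ together with the identity $\epsilon_{ijk}\epsilon_{imn} = \delta_{jm}\delta_{kn} - \delta_{jn}\delta_{km}$, the squared norm expands into eight contractions. Four of them vanish because differentiating the unit-norm constraints $\vec d_\Trans\cdot \vec d_\Trans = \vec d_\AB\cdot\vec d_\AB = 1$ gives $\da_{j,\ell}\da_j = \db_{k,\ell}\db_k = 0$; two more are killed by $\vec d_\Trans\cdot\vec d_\AB = 0$; and the last two, once combined using the differentiated orthogonality $\da_{j,\ell}\db_j = -\da_j\db_{j,\ell}$, merge into a single scalar, yielding
\begin{equation*}
|\grad \vec d|^2 = |\grad \vec d_\Trans|^2 + |\grad \vec d_\AB|^2 - 2\da_{m,j}\da_{n,j}\db_m\db_n.
\end{equation*}
The extra scalar term is exactly the one appearing in the claimed $\vec F$.

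To finish, I observe that $\vec d\cdot\vec d=1$ automatically forces $(-\Delta \vec d - |\grad \vec d|^2\vec d)\cdot \vec d = 0$ (by differentiating the constraint twice and contracting with $\vec d$), so defining $\vec F \coloneqq -\Delta \vec d - |\grad \vec d|^2\vec d$ gives at once $\vec F\cdot\vec d = 0$ and the LNLC equation \eqref{eq:fiber-saddle-loaded} with multiplier $\lambda = -\tfrac 1 2|\grad \vec d|^2$. Substituting the two displayed identities into this definition and grouping the scalar $\vec d$-collinear piece with the Levi-Civita contribution recovers the stated form of $\vec F$. The main obstacle is the bookkeeping in the $|\grad \vec d|^2$ step: several of the eight contractions look independent but must be paired off using the two differentiated constraints $\da_j\da_{j,\ell}=0$ and $(\da_j\db_j)_{,\ell}=0$, and organizing them systematically is what makes the answer reduce to just the two clean pieces appearing in the statement.
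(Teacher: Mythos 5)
Your argument is correct, and for the two central identities --- the Leibniz-plus-substitution computation of $\Delta\vec d$ and the $\epsilon$--$\delta$ expansion of $|\grad\vec d|^2$ --- it follows the same route as the paper; the bookkeeping you describe is exactly the paper's, though your tally ``four of them vanish'' by the unit-norm constraints should read ``two'' (the remaining two of those four survive as $|\grad\da|^2$ and $|\grad\db|^2$), a slip that does not affect your displayed result. Where you genuinely depart from the paper is the last step: the paper verifies $\vec F\cdot\vec d=0$ by a second explicit $\epsilon$--$\delta$ computation, whereas you observe that for any unit field the constraint gives $\vec d\cdot\Delta\vec d=-|\grad\vec d|^2$, so $\vec F\coloneqq-\Delta\vec d-|\grad\vec d|^2\vec d$ is automatically orthogonal to $\vec d$. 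This is shorter and makes transparent why the load of any unit field can be taken orthogonal to it, rather than appearing as a happy cancellation. One caveat on the final identification: assembling your two identities gives
\begin{equation*}
F_i=-2\da_{k,j}\db_{\ell,j}\epsilon_{k\ell i}+2\da_{m,j}\da_{n,j}\db_m\db_n\,d_i,
\end{equation*}
which is the negative of the lemma's display and carries a factor $d_i$ on the scalar term that the display omits. Both discrepancies are defects of the statement rather than of your proof: the paper's own Laplacian computation retains the $d_i$, and its sign conventions for $\vec F$ are inconsistent between the lemma and the proof body. Since $\vec F\cdot\vec d=0$ and the loaded-nematic property are insensitive to these typos, your conclusion stands.
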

\begin{proof}
First note that as $\vec d_\Trans$ and $\vec d_\AB$ are orthogonal, the following holds:
    $$ \da_{i,j}\db_i + \da_i\db_{i,j}=0.$$
We start by computing the norm $|\grad \vec d|^2$ using that $d_{i,j} =(\da_{m,j}\db_n + \da_{m}\db_{n,j})\epsilon_{mni}$ and the identity $\epsilon_{mni}\epsilon_{pqi} = \delta_{mp}\delta_{nq} - \delta_{mq}\delta_{np}$:
\begin{align*}
    |\grad \vec d|^2 &= d_{i,j} d_{i,j} \\
        &= (\da_{m,j}\db_n+\da_m\db_{n,j})\epsilon_{mni}(\da_{p,j}\db_q+\da_p\db_{q,j})\epsilon_{pqi}\\
        &= (\da_{m,j}\db_n+\da_m\db_{n,j})(\da_{p,j}\db_q+\da_p\db_{q,j})(\delta_{mp}\delta_{nq} - \delta_{mq}\delta_{np})\\ 
        &= (\da_{m,j}\db_n+\da_m\db_{n,j})(\da_{m,j}\db_n+\da_m\db_{n,j} \\
        &\qquad\qquad\qquad - \da_{n,j}\db_m+\da_n\db_{m,j}) \\
        &= \da_{m,j}\db_n(\da_{m,j}\db_n+\da_m\db_{n,j} - \da_{n,j}\db_m+\da_n\db_{m,j}) \\
        &\qquad\qquad  +\da_m\db_{n,j}(\da_{m,j}\db_n+\da_m\db_{n,j} - \da_{n,j}\db_m+\da_n\db_{m,j}) \\
        &= \da_{m,j}\da_{m,j} - \da_{m,j}\db_n\da_{n,j}\db_m \\
        &\qquad\qquad + \db_{n,j}\db_{n,j} - \da_m\db_{n,j}\da_n\db_{m,j} \\
        &= |\grad \da|^2 + |\grad \db|^2 - 2\da_{m,j}\da_{n,j}\db_m\db_n, 
\end{align*}
which implies that 
    $$ -\frac 1 2|\grad \vec d|^2 = \lambda_\Trans + \lambda_\AB + 2\da_{m,j}\da_{n,j}\db_m\db_n.$$

\noindent We then compute the Laplacian of the vector $\vec d$:

    \begin{align*}
        (\dive \grad \vec d)_i &= (\dive \grad \left(\vec d_\Trans \times \vec d_\AB\right))_i \\
            &= (\grad \left(\vec d_\Trans \times \vec d_\AB\right))_{ij,j} \\
            &= (\vec d_\Trans \times \vec d_\AB)_{i,jj} \\
            &= ([\vec d_\Trans]_k[\vec d_\AB]_\ell \epsilon_{k\ell i})_{,jj} \\
            &= ([\vec d_\Trans]_{k,jj}[\vec d_\AB]_\ell + 2[\vec d_\Trans]_{k,j}[\vec d_\AB]_{\ell,j} + [\vec d_\Trans]_k[\vec d_\AB]_{\ell,jj})\epsilon_{k\ell i}\\
            &= (2(\lambda_\Trans+\lambda_\AB)[\vec d_\Trans]_k[\vec d_\AB]_\ell + 2[\vec d_\Trans]_{k,j}[\vec d_\AB]_{\ell,j})\epsilon_{k\ell i}\\
            &= 2(\lambda_\Trans+\lambda_\AB)[\vec d]_i + 2[\vec d_\Trans]_{k,j}[\vec d_\AB]_{\ell,j}\epsilon_{k\ell i}\\
            &= 2\left(-\frac 1 2 |\grad \vec d|^2 \right)[\vec d]_i + 2[\vec d_\Trans]_{k,j}[\vec d_\AB]_{\ell,j}\epsilon_{k\ell i} - 2\da_{m,j}\da_{n,j}\db_m\db_n d_i,
    \end{align*}
from which we have recovered the $|\grad \vec d|^2$ term to build the multiplier. We have obtained a loading force $\vec F = 2[\vec d_\Trans]_{k,j}[\vec d_\AB]_{\ell,j}\epsilon_{k\ell i} + 2\da_{m,j}\da_{n,j}\db_m\db_n$, where we note that $\vec F\cdot \vec d=0$: 
\begin{align*}
    \vec F \cdot \vec d &= f_id_i \\
        &= -2\da_{m,j}\da_{n,j}\db_m\db_n + 2\da_{k,j}\db_{l,j}\epsilon_{kli}\da_p\db_q\epsilon_{pqi} \\
        &= -2\da_{m,j}\da_{n,j}\db_m\db_n + 2\da_{k,j}\db_{l,j}\da_p\db_q(\delta_{kp}\delta_{lq}-\delta_{kq}\delta_{lp}) \\
        &= -2\da_{m,j}\da_{n,j}\db_m\db_n - 2\da_{k,j}\db_{l,j}\da_l\db_k \\
        &= -2\da_{m,j}\da_{n,j}\db_m\db_n + 2\da_{k,j}\db_{l}\da_{l,j}\db_k \\
        &= 0.
\end{align*}

The proof is concluded by setting $\lambda \coloneqq -\frac 1 2 |\grad \vec d|^2$, by noting that $|\vec d|=1$, that $\lambda = -\frac 1 2|\grad \vec d|^2 + \vec F \cdot \vec d$, and that the previous computations can be written as 
        $$ -\Delta \vec d + 2\lambda\vec d = \vec F. $$
\end{proof}
The final step of this section is showing that $\vec f\coloneqq \mat Q\vec d$, where $\vec d$ is a loaded nematic liquid crystal such that its load $\vec F$ satisfies $\vec F\cdot \vec d=0$. Then, $\vec f$ is also a loaded nematic liquid crystal. We establish this result in the following Lemma, for which we require a technical hypothesis. 
\begin{lemma}\label{lemma:f}
Consider a nematic liquid crystal $\vec d$ with Lagrange multiplier $\lambda$ and an external force of $\vec F$ such that $\vec F\cdot \vec d=0$. Consider also a rotation tensor $\mat Q$ such that 
    \begin{equation}\label{eq:error-nematic}
        \left([\Delta \mat Q]^T\mat Q - \mat Q^T[\Delta \mat Q]\right): \vec d\otimes \vec d=0. 
    \end{equation}
Then, the vector field $\vec f\coloneqq \mat Q\vec d$ behaves as a loaded nematic liquid crystal with Lagrange multiplier $\lambda_f\coloneqq -\frac 1 2|\grad \vec f|^2$ and an external force given by 
    $$ \vec S_i = -Q_{ik,jj}d_k +Q_{ik}F_k - 2 Q_{ik,j}d_{k,j}-Q_{im,j}Q_{in,j}Q_{ip}d_md_nd_p-2Q_{im,j}d_md_nd_{n,j}.$$  
In particular, it holds that $\vec S\cdot \vec f = -\frac 1 2\left([\Delta \mat Q]^T\mat Q - \mat Q^T[\Delta \mat Q]\right): \vec d\otimes \vec d=0$.
\end{lemma}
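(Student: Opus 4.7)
The plan is to plug $\vec f = \mat Q \vec d$ into the definition of a loaded NLC, compute $-\Delta \vec f - |\grad \vec f|^2 \vec f$ term by term, and read off $\vec S$ from the result; then verify the orthogonality $\vec S \cdot \vec f = 0$ using the hypothesis \eqref{eq:error-nematic}. First, the unit-norm constraint is immediate from the rotation property: $f_i f_i = Q_{ik} Q_{il} d_k d_l = \delta_{kl} d_k d_l = |\vec d|^2 = 1$, so $\vec f \in \mathbb S^2$.

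Next, I would expand the Laplacian via the product rule on $f_i = Q_{ik} d_k$ to obtain $(\Delta \vec f)_i = Q_{ik,jj} d_k + 2 Q_{ik,j} d_{k,j} + Q_{ik} d_{k,jj}$, and then substitute $d_{k,jj} = 2\lambda d_k - F_k$ from the hypothesis that $\vec d$ is a LNLC. Since $\vec F \cdot \vec d = 0$, one has $\lambda = -\tfrac12 |\grad \vec d|^2$, so the last term becomes $-|\grad \vec d|^2 f_i - Q_{ik} F_k$. In parallel, I would compute $f_{i,j} = Q_{ik,j} d_k + Q_{ik} d_{k,j}$, square it and use $Q_{ik}Q_{il}=\delta_{kl}$ to obtain
\begin{equation*}
|\grad \vec f|^2 = Q_{ik,j} Q_{il,j} d_k d_l + 2 Q_{ik,j} Q_{il} d_k d_{l,j} + |\grad \vec d|^2.
\end{equation*}
Solving this for $|\grad \vec d|^2$ and substituting back into $-\Delta \vec f - |\grad \vec f|^2 \vec f$ eliminates the $|\grad \vec d|^2$ term, leaves $Q_{ik}F_k$ and $-Q_{ik,jj}d_k - 2Q_{ik,j}d_{k,j}$ as the smooth parts, and two cubic-in-$\mat Q$ contributions, yielding the stated $\vec S$ after relabelling indices.

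To verify $\vec S \cdot \vec f = 0$, I would contract with $f_i = Q_{il}d_l$. The load contribution gives $Q_{ik}F_k Q_{il} d_l = \delta_{kl} F_k d_l = \vec F \cdot \vec d = 0$. The terms $-2 Q_{ik,j} d_{k,j} Q_{il} d_l$ combined with parts of the cubic contributions can be rewritten using the two rotation identities obtained by differentiating $Q^T Q = \ten I$ once and twice, namely $Q_{ik,j}Q_{il} + Q_{ik}Q_{il,j}=0$ and $Q_{ik,jj}Q_{il} + 2 Q_{ik,j}Q_{il,j} + Q_{ik}Q_{il,jj}=0$, together with the differentiated unit-norm condition $d_l d_{l,j}=0$. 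After these cancellations, what survives is precisely the bilinear expression $-\tfrac12([\Delta \mat Q]^T \mat Q - \mat Q^T [\Delta \mat Q]):\vec d\otimes \vec d$, which vanishes by \eqref{eq:error-nematic}.

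The main obstacle is the index bookkeeping in the reduction of $\vec S \cdot \vec f$: the cubic terms in $\mat Q$ contracted with $\vec d\otimes \vec d\otimes \vec d$ do not individually match either side of the hypothesis, and it is only after repeated use of the two orthogonality identities and the constraint $d_l d_{l,j}=0$ that the remainder reorganises into the skew-symmetric form appearing in \eqref{eq:error-nematic}. A small sanity check along the way is that, had we not required \eqref{eq:error-nematic}, the residual $\vec S \cdot \vec f$ would be precisely this antisymmetric contraction, so the condition is sharp for the conclusion $\lambda_f = -\tfrac12 |\grad \vec f|^2$.
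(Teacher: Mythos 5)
Your plan follows essentially the same route as the paper's proof: expand $\Delta(\mat Q\vec d)$ by the product rule, substitute the loaded Frank--Oseen equation for $\Delta\vec d$, express $|\grad\vec f|^2$ as $|\grad\vec d|^2$ plus the $\mat Q$-correction terms so as to reassemble the multiplier $\lambda_f=-\tfrac12|\grad\vec f|^2$, read off $\vec S$, and then reduce $\vec S\cdot\vec f$ to the skew contraction $-\tfrac12([\Delta\mat Q]^T\mat Q-\mat Q^T[\Delta\mat Q]):\vec d\otimes\vec d$ using the first and second derivatives of $\mat Q^T\mat Q=\ten I$ together with $d_ld_{l,j}=0$. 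The approach is correct and matches the paper, including the closing observation that the residual equals exactly the antisymmetric contraction, so hypothesis \eqref{eq:error-nematic} is precisely what is needed.
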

\begin{proof}
 We will require the following computation:
    \begin{align*}\label{eq:Q-ortho}
       0 = \Delta (\mat Q^T\mat Q)_{ij} = (Q_{ki} Q_{kj})_{,ll} = Q_{ki,ll}Q_{kj} + 2Q_{ki,l} Q_{kj,l} + Q_{ki}Q_{kj,ll}. 
    \end{align*}
Now we compute the gradient norm:
\begin{align*}
    |\grad \vec f|^2 &= f_{i,j} f_{i,j} \\
      &= (Q_{im}d_m)_{,j}(Q_{in}d_n)_{,j} \\
      &= (Q_{im,j}d_m + Q_{im}d_{m,j})(Q_{in,j}d_n + Q_{in}d_{n,j}) \\
      &= |\grad \vec d|^2 + Q_{im,j}Q_{in,j}d_md_n + Q_{im,j}Q_{in}d_m d_{n,j} + Q_{im}Q_{in,j}d_{m,j}d_n \\
      &= |\grad \vec d|^2 + Q_{im,j}Q_{in,j}d_md_n + 2Q_{im,j}Q_{in}d_m d_{n,j} \\
      &= |\grad \vec d|^2 -\frac 1 2\left(Q_{im,jj}Q_{in} + Q_{im}Q_{in,jj}\right)d_md_n + 2Q_{im,j}Q_{in}d_m d_{n,j},
\end{align*}
and as before, we compute the Laplacian of $\vec f$:
    \begin{align*}
        (\Delta \vec f)_i = (\mat Q\vec d)_{i,jj} &= Q_{ik,jj}d_k + Q_{ik}d_{k,jj} + 2Q_{ik,j} d_{k,j} \\
                &= Q_{ik,jj}d_k + (\mat Q[\Delta \vec d])_i + 2 Q_{ik,j} d_{k,j} \\
                &= Q_{ik,jj}d_k + 2\lambda f_i - (\mat Q \vec F)_i + 2Q_{ik,j}d_{k,j} \\
                &= Q_{ik,jj}d_k + 2\left(\lambda - Q_{im,j}Q_{in,j}d_m d_n - 2Q_{im,j}Q_{in}d_m d_{n,j} \right) f_i \\
                &\qquad\qquad - (\mat Q \vec F)_i + 2Q_{ik,j}d_{k,j} + (Q_{im,j}Q_{in,j}d_m d_n + 2 Q_{im,j}Q_{in}d_m d_{n,j})Q_{ip}d_p,
    \end{align*}
where we have built the multiplier $\lambda_f=-\frac 1 2 |\grad f|^2$, and obtained the external force
    $$ -\vec S=Q_{ik,jj}d_k -(\mat Q \vec F)_i + 2Q_{ik,j}d_{k,j} + (Q_{im,j}Q_{in,j}d_m d_n + 2Q_{im,j}Q_{in}d_m d_{n,j})Q_{ip}d_p. $$
Its projection onto $\vec f$ is given by 
    \begin{align*}
        -\vec S\cdot \vec f&=Q_{ik,jj}d_k Q_{ip}d_p-(\mat Q \vec F)_i(\mat Q\vec d)_i + 2Q_{ik,j}d_{k,j}Q_{ip}d_p + Q_{im,j}Q_{in,j}d_m d_n + 2Q_{im,j}Q_{in}d_m d_{n,j} \\
            &= Q_{ik,jj}d_k Q_{ip}d_p + 2Q_{ik,j}d_{k,j}Q_{ip}d_p + Q_{im,j}Q_{in,j}d_m d_n - 2Q_{im}Q_{in,j}d_m d_{n,j} \\
            &= Q_{ik,jj}d_k Q_{ip}d_p + Q_{ik,j}Q_{ip,j}d_k d_p \\ 
            &= \frac 1 2 \left(Q_{ik,jj}Q_{ip}- Q_{ik}Q_{ip,jj}\right)d_k d_p.
    \end{align*}
As in the previous case, we see that the fiber field $\vec f$ is exactly a loaded nematic liquid crystal if $\vec S\cdot \vec f=0$, which holds if and only if
    $$ \left([\Delta \mat Q]^T\mat Q - \mat Q^T[\Delta \mat Q]\right): \vec d \otimes \vec d=0,$$
i.e.  hypothesis \eqref{eq:error-nematic}. This concludes the proof.
\end{proof}

The main difficulty in computing the vector field $\vec f$ is that we require a transmurally varying weight to obtain an explicit rotation matrix that interpolates the known boundary values. Luckily, the Frank-Oseen equations naturally yield  a vector interpolation method, which we show analytically in Section \ref{section:rotation}. In practice, this implies that an efficient solution for avoiding the computation of $\mat Q$ in all the domain is to use the expression $\mat Q\vec d$ as a boundary condition (where the values of $\mat Q$ are known), and consider a Frank-Oseen problem assuming that $[\Delta \mat Q]^T\mat Q - \mat Q^T[\Delta \mat Q]\perp \vec d\otimes \vec d$.
This justifies the practical importance of Lemma \ref{lemma:f}. We verify this assumption in Section \ref{section:application}, and show analytically the unit-vector interpolation property in Section \ref{section:rotation}. We highlight that hypothesis \eqref{eq:error-nematic} is not to be intepreted as a weakness of the Frank-Oseen equations, but as the required hypothesis under which RBMs yield a nematic liquid crystal, as expected from physical observations.

\subsection{The Frank-Oseen model as a unit vector interpolation method}\label{section:rotation}

The scope of this section is to show that the Frank-Oseen equations yield the same vector rotations used in quaternion interpolation, which substitutes the role of the weight function in RBMs. We provide three examples: (i) a 1D model in which we recover the standard quaternion interpolation formula--\emph{slerp}--analytically, (ii) a 2D case in which we show the mechanism through with the Frank-Oseen theory generates the apex singularity and (iii) a numerical example with more complex spatial interactions, where we show that interpolation velocity can behave nonlinearly according to the boundary conditions.

\subsubsection{Slerp in a 1D domain}
We restrict our analysis to the one-dimensional case. For this, we consider that the fiber field is of the form $\vec f=(u,v,0)=:(\vec d, 0)$, and that it is constant along the $y$ and $z$ axes. This transforms \eqref{eq:fiber-saddle} into the following system of ODEs in an interval $I=(0,1)$:
\begin{equation}\label{eq:fo-1d}
    \begin{aligned}
    -u'' + 2\lambda u &= 0 \quad \text{ in $I$},\\
    -v'' + 2\lambda v &= 0 \quad \text{ in $I$},\\
    u^2 + v^2        &= 1 \quad \text{ in $I$},\\
    \vec d(0) &= \vec a, \\
    \vec d(1) &= \vec b, \\
    \end{aligned}
\end{equation}
for given unit vectors $\vec a, \vec b$ such that $\vec a\neq \vec b$. Using standard Ordinary Differential Equations theory, we propose a solution of the form
$$u(x) = C_1\cosb{\omega x} + C_2\sinb{\omega x},$$
$$ v(x) = D_1\cosb{\omega x} + D_2\sinb{\omega x}, $$
where $\lambda = -\frac 1 2\omega^2$. After some algebraic manipulations, the first boundary condition yields $(C_1, D_1) = \vec a$, and the second one yields $(C_2, D_2) = [\sinb \omega]^{-1}(\vec b - \cosb \omega \vec a)$. This means that the solution can be written as
\begin{align*}
\vec d &= [\sinb\omega]^{-1}\left([\sinb{\omega} \cosb{\omega x} - \cosb{\omega} \sinb{\omega x} ]\vec a+\sinb{\omega x}\vec b\right),
\end{align*}
which using $\sinb{\alpha-\beta}=\sinb \alpha \cosb \beta - \cosb \alpha \sinb \beta$ can be reduced to
\begin{equation}\label{eq:fo-is-rotation}
\vec d(x) = \frac{\sinb{\omega[1-x]}}{\sinb{\omega}}\vec a + \frac{\sinb{\omega x}}{\sinb{\omega}}\vec b.
\end{equation}
This is exactly the formula used for quaternion interpolation if $\omega$ were the angle subtended by $\vec a$ and $\vec b$, as we will see. This formula is also valid for the interpolation of elements in $\mathbb S^2$, as well as unit quaternions \cite{AnimatingRotatShoema1985}. We finally compute the value of $\omega$ by imposing the unit norm constraint:
$$\sinsq{\omega} = \sinsq{w[1-x]} + \sinsq{\omega x} + 2\sinb{w[1-x]}\sinb{\omega x}\langle \vec a,\vec b\rangle.$$
We note that we can write 
$$ \sinb{\omega} = \sinb{\omega - \omega x + \omega x} = \sinb{\omega[1-x]}\cosb{\omega x} + \cosb{\omega[1-x]}\sinb{\omega x},$$
which together with the unit norm constraint gives the following:
$$\sinb{\omega x}\sinb{\omega[1-x]}\left(\cosb{\omega} - \langle \vec a,\vec b\rangle\right)=0.$$
As this equality holds for every $x$, and $\omega\neq 0$ (unless $\vec a=\vec b$), we obtain that 
\begin{equation}\label{eq:omega}
\cos \omega = \langle \vec a, \vec b\rangle,
\end{equation}
as expected. This fundamental result concludes our claim, as $\vec a$ and $\vec b$ are unit vectors. We illustrate this result in Figure \ref{fig:fo-rotation}, where we compare the interpolated vector field for two given boundary vectors with (a) a Laplace equation, (b) the Laplace equation normalized, and (c) the Frank-Oseen solution from \eqref{eq:fo-1d}. Note in particular that arbitrarily normalizing can give rise to spurious singularities, and in fact to compute the vector field in (b), the normalization was modified with a small constant $\epsilon=10^{-8}$ using $\Pi_{\mathbb S^2}^\epsilon$.

\begin{lemma}
The solution of \eqref{eq:fo-1d} is given by \eqref{eq:fo-is-rotation}, where $\omega$ is given by \eqref{eq:omega}.
\end{lemma}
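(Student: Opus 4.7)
The plan is to promote the heuristic computation already sketched in the paragraphs preceding the lemma into a rigorous three-step proof, using the structure that the two scalar ODEs in \eqref{eq:fo-1d} are decoupled apart from sharing the Lagrange multiplier $\lambda$, and are constrained only through the pointwise unit norm.

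\textbf{Step 1 (constancy of the multiplier).} The key observation, without which the sinusoidal ansatz is not justified, is that $\lambda$ must be constant. Differentiating $|\vec d|^2=1$ once gives $\vec d\cdot \vec d'=0$. Differentiating $|\vec d'|^2$ and substituting the Euler--Lagrange equation $\vec d''=2\lambda\vec d$ gives $(|\vec d'|^2)' = 2\vec d'\cdot \vec d'' = 4\lambda(\vec d\cdot\vec d')=0$, so $|\vec d'|^2$ is constant. Combined with the identity $\lambda=-\tfrac12|\vec d'|^2$ already noted after \eqref{eq:fiber-saddle}, this forces $\lambda=-\omega^2/2$ for some constant $\omega\ge 0$; the hypothesis $\vec a\neq\vec b$ rules out $\omega=0$.

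\textbf{Step 2 (general solution and boundary data).} With $\lambda$ constant, each component satisfies the harmonic oscillator $f''+\omega^2 f=0$, so I can write $\vec d(x)=\vec A\cos(\omega x)+\vec B\sin(\omega x)$ with $\vec A,\vec B\in\mathbb R^2$. The condition $\vec d(0)=\vec a$ gives $\vec A=\vec a$, and $\vec d(1)=\vec b$ gives $\vec B=(\vec b-\cos\omega\,\vec a)/\sin\omega$. A single application of the subtraction identity $\sin\omega\cos(\omega x)-\cos\omega\sin(\omega x)=\sin(\omega[1-x])$ then rearranges the expression into the slerp form \eqref{eq:fo-is-rotation}.

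\textbf{Step 3 (determining $\omega$).} Substituting \eqref{eq:fo-is-rotation} into $|\vec d(x)|^2=1$ and expanding gives
\[
\sin^2(\omega[1-x])+\sin^2(\omega x)+2\sin(\omega[1-x])\sin(\omega x)\langle\vec a,\vec b\rangle=\sin^2\omega.
\]
Writing $\sin\omega=\sin(\omega[1-x])\cos(\omega x)+\cos(\omega[1-x])\sin(\omega x)$, squaring, and subtracting term by term cancels everything except a multiple of $\sin(\omega[1-x])\sin(\omega x)\bigl(\cos\omega-\langle\vec a,\vec b\rangle\bigr)$. Since this must vanish for every $x\in(0,1)$ and the sine factors are nonzero except on a discrete set, I conclude $\cos\omega=\langle\vec a,\vec b\rangle$, which is exactly \eqref{eq:omega}.

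The main obstacle is Step 1: without a priori constancy of $\lambda$, Step 2 has no basis, so the argument absolutely must begin by differentiating the constraint twice and exploiting the algebraic relation between $\lambda$ and $|\vec d'|^2$. Once that is established, the remainder reduces to elementary trigonometric manipulations essentially identical to those already performed in the text above the lemma.
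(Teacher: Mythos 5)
Your proof is correct and follows essentially the same route as the paper: the sinusoidal ansatz with $\lambda=-\omega^2/2$, the boundary conditions fixing the two coefficient vectors, and the pointwise unit-norm constraint forcing $\cos\omega=\langle\vec a,\vec b\rangle$. Your Step 1, which derives the constancy of $\lambda$ from the constraint and the Euler--Lagrange equation instead of positing the ansatz, is a welcome strengthening that the paper's own derivation omits.
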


\begin{figure}
    \centering
    \usetikzlibrary {arrows.meta}
    \def\scale{0.6}
    \def\samples{12}
    \def\lw{1.1pt} 
    \centering
    \begin{subfigure}{0.33\textwidth}
        \def\V{\scale*t)}
         \begin{tikzpicture}
         \begin{axis}[width=\textwidth, xmin=-0.1, xmax=1.1, ymin=-1, ymax=1, axis x line=none, axis y line=none, xtick=\empty]
             \addplot[blue!80!black, point meta={abs(\V)}, line width=\lw, domain=-1:1, variable=\t, quiver={u=0.0,v=\V, every arrow/.append style={ line width=\lw*\pgfplotspointmetatransformed/1000 }}, 
                    -{Latex}, samples=\samples] ({(t/2 + 0.5}, {0.0});
             \draw[black, -] (axis cs:-0.1,0) -- (axis cs:1.1,0);
        \end{axis}
        \end{tikzpicture}
    \caption{Laplace.}
    \end{subfigure}
    \begin{subfigure}{0.33\textwidth}
         \begin{tikzpicture}
         \begin{axis}[width=\textwidth, xmin=-0.1, xmax=1.1, ymin=-1, ymax=1, axis x line=none, axis y line=none, xtick=\empty]
             \addplot[blue!80!black, line width=\lw, domain=-1:1, variable=\t, quiver={u=0.0,v=\scale*t/abs(1e-12 + t}, -{Latex}, samples=\samples] ({(t/2 + 0.5}, {0.0});
             \draw[black, -] (axis cs:-0.1,0) -- (axis cs:1.1,0);
        \end{axis}
        \end{tikzpicture}
    \caption{Normalized Laplace.}
    \end{subfigure}
    \begin{subfigure}{0.33\textwidth}
         \begin{tikzpicture}
         \begin{axis}[width=\textwidth, xmin=-0.6*pi, xmax=0.6*pi, ymin=-1, ymax=1, axis x line=none, axis y line=none, xtick=\empty]
             \addplot[blue!80!black, line width=\lw, domain=-0.5*pi:0.5*pi, variable=\t, quiver={u={1.5*\scale*cos(deg(t))},v={\scale*sin(deg(t))}}, -{Latex}, samples=\samples] ({t}, {0.0});
             \draw[black, -] (axis cs:-0.6*pi,0) -- (axis cs:0.6*pi,0);
        \end{axis}
        \end{tikzpicture}
    \caption{Frank-Oseen.}
    \end{subfigure}
    \caption{Comparison of vector interpolation schemes. In (a) we interpolate the boundary values with a Laplace equation. The solution of (a) is then normalized in (b). In (c) we show the solution of the Frank-Oseen equations.}
    \label{fig:fo-rotation}
\end{figure}
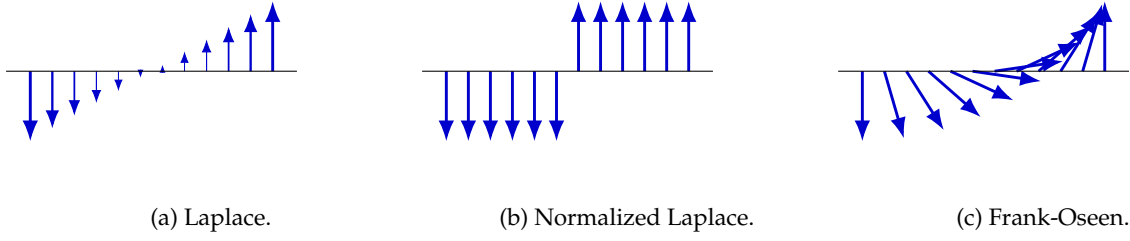

\begin{remark}
We note that the previous analysis and conclusions are also valid in a 1D segment in 3D as long as the vector field is constant along the directions orthogonal to the segment. 
\end{remark}

\subsubsection{A harmonic example in 2D}\label{section:harmonic-2d}
We now consider a circular domain $\Omega=\{\vec x: |\vec x|\leq 1\}$ and a singular solution of the Frank-Oseen equations given by 
    $$\vec d = \frac{\vec x}{|\vec x|}.$$
Consider a constant rotation matrix $\mat Q$ with $\mat Q^T\mat Q=\mat I$, then the vector $\mat Q\vec d$ is still a solution of the Frank-Oseen equation:
    $$ \vec 0 = \mat Q(-\Delta \vec d + \lambda \vec d) = -\Delta \left(\mat Q\vec d\right) + \lambda \left(\mat Q\vec d\right),$$
where the unitary norm constraint is naturally satisfied, and the boundary conditions need to be rotated as well. We note that these computations are locally equivalent to the ones performed on the epicardium when computing transversal vector field and then when rotating it to compute the fiber field. We depict the singular solution, a rotated solution, and the transversal solution in Figure \ref{fig:analytic-2d}, all generated as rotations of the singular radial function $\vec d$. In addition, we show in the second row of Figure \ref{fig:analytic-2d} the restriction of the solutions to a horizontal arc passing through the origin, and note that the vector interpolation obtained through this line is piecewise constant and discontinuous, in constrast to the analytic 1D example.

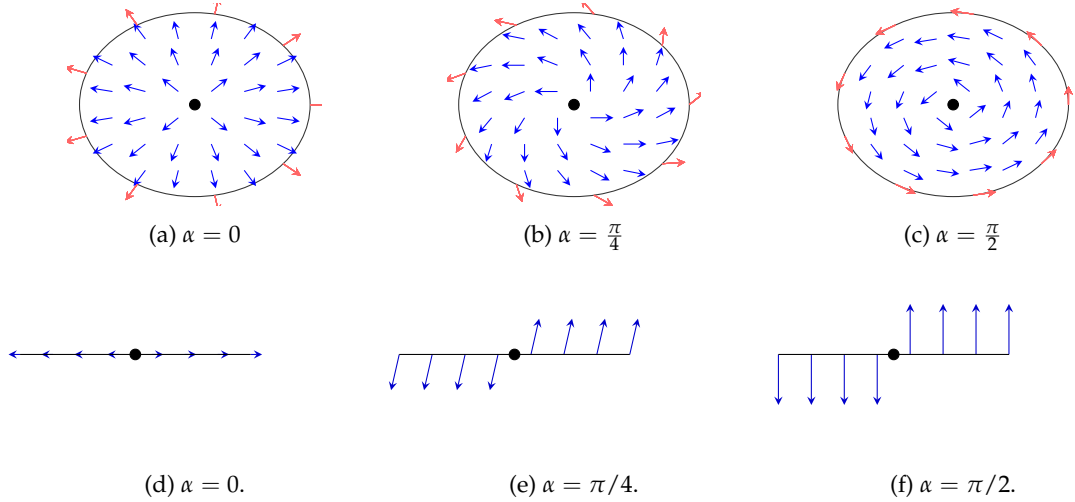
\begin{figure}
    \centering
    \def\figwid{0.3}
    \def\lw{1.1pt}
    \def\scale{0.6}
    \def\nrm{sqrt(x^2+y^2)}
    \def\denom{(1e-6 + sqrt(x^2+y^2))}
    \def\samplesblue{8}
    \def\samplesred{10}
    \def\scalearrow{0.2}
    \begin{subfigure}{\figwid\textwidth}
        \centering
        \begin{tikzpicture}
        \begin{axis}[
            domain=-1:1, 
            width=\textwidth,
            xmin=-1.1,   xmax=1.1,
            ymin=-1.1,   ymax=1.1,
            axis x line=none, axis y line=none, xtick=\empty,
            view={0}{90},
            axis background/.style={fill=white},
        ]
            \draw[black!80!white] \pgfextra{
              \pgfpathellipse{\pgfplotspointaxisxy{0}{0}}
                {\pgfplotspointaxisdirectionxy{1}{0}}
                {\pgfplotspointaxisdirectionxy{0}{1}}
            };
            \addplot3 [only marks,mark=*] coordinates { (0,0,0) };
            \addplot3[blue,
			quiver={
                u={x/\denom},
                v={y/\denom},
			    scale arrows=\scalearrow,
			},
            -stealth,samples=\samplesblue]
				{\nrm <= 0.9 ? 1 : NaN};
             \addplot3[red!60!white, domain=0:2*pi, variable=\t, quiver={u={x/\denom},v={y/\denom},w={0}, scale arrows=\scalearrow}, -stealth, samples=\samplesred] ({cos(deg(t))}, {sin(deg(t))}, {0});
        \end{axis}
        \end{tikzpicture}
        \caption{$\alpha=0$}
    \end{subfigure}
    \begin{subfigure}{\figwid\textwidth}
        \centering
        \begin{tikzpicture}
        \begin{axis}[
            domain=-1:1, 
            width=\textwidth,
            xmin=-1.1,   xmax=1.1,
            ymin=-1.1,   ymax=1.1,
            axis x line=none, axis y line=none, xtick=\empty,
            view={0}{90},
            axis background/.style={fill=white},
        ]
            \draw[black!80!white] \pgfextra{
              \pgfpathellipse{\pgfplotspointaxisxy{0}{0}}
                {\pgfplotspointaxisdirectionxy{1}{0}}
                {\pgfplotspointaxisdirectionxy{0}{1}}
            };
            \addplot3 [only marks,mark=*] coordinates { (0,0,0) };
            \addplot3[blue,
			quiver={
                u={cos(deg(pi/4))*x/\denom-sin(deg(pi/4))*y/\denom},
                v={sin(deg(pi/4))*x/\denom+cos(deg(pi/4))*y/\denom},
			    scale arrows=\scalearrow,
			},
            -stealth,samples=\samplesblue]
				{\nrm <= 1 ? 1 : NaN};
             \addplot3[red!60!white, domain=0:2*pi, variable=\t, quiver={u={cos(deg(pi/4))*x/\denom-sin(deg(pi/4))*y/\denom},v={sin(deg(pi/4))*x/\denom+cos(deg(pi/4))*y/\denom},w={0}, scale arrows=\scalearrow}, -stealth, samples=\samplesred] ({cos(deg(t))}, {sin(deg(t))}, {0});
        \end{axis}
        \end{tikzpicture}
        \caption{$\alpha=\frac \pi 4$}
    \end{subfigure}
    \begin{subfigure}{\figwid\textwidth}
        \centering
        \begin{tikzpicture}
        \begin{axis}[
            domain=-1:1, 
            width=\textwidth,
            xmin=-1.1,   xmax=1.1,
            ymin=-1.1,   ymax=1.1,
            axis x line=none, axis y line=none, xtick=\empty,
            view={0}{90},
            axis background/.style={fill=white},
        ]
            \draw[black!80!white] \pgfextra{
              \pgfpathellipse{\pgfplotspointaxisxy{0}{0}}
                {\pgfplotspointaxisdirectionxy{1}{0}}
                {\pgfplotspointaxisdirectionxy{0}{1}}
            };
            \addplot3 [only marks,mark=*] coordinates { (0,0,0) };
            \addplot3[blue,
			quiver={
                u={cos(deg(pi/2))*x/\denom-sin(deg(pi/2))*y/\denom},
                v={sin(deg(pi/2))*x/\denom+cos(deg(pi/2))*y/\denom},
			    scale arrows=\scalearrow,
			},
            -stealth,samples=\samplesblue]
				{\nrm <= 1 ? 1 : NaN};
             \addplot3[red!60!white, domain=0:2*pi, variable=\t, quiver={u={cos(deg(pi/2))*x/\denom-sin(deg(pi/2))*y/\denom},v={sin(deg(pi/2))*x/\denom+cos(deg(pi/2))*y/\denom},w={0}, scale arrows=\scalearrow}, -stealth, samples=\samplesred] ({cos(deg(t))}, {sin(deg(t))}, {0});
        \end{axis}
        \end{tikzpicture}
        \caption{$\alpha=\frac \pi 2$}
    \end{subfigure}

    \begin{subfigure}{\figwid\textwidth}
         \begin{tikzpicture}
         \begin{axis}[width=\textwidth, xmin=-1.1, xmax=1.1, ymin=-0.2, ymax=0.2, axis x line=none, axis y line=none, xtick=\empty]
             \addplot[blue!80!black, domain=-1:1, variable=\t, quiver={u={t/\denom},v={0},scale arrows=0.1}, -stealth, samples=\samplesblue] ({t}, {0});
             \addplot [only marks,mark=*] coordinates { (0,0) };
             \draw[black, -] (axis cs:-1,0) -- (axis cs:1,0);
        \end{axis}
        \end{tikzpicture}
    \caption{$\alpha=0$.}
    \end{subfigure}
    \begin{subfigure}{\figwid\textwidth}
         \begin{tikzpicture}
         \begin{axis}[width=\textwidth, xmin=-1.1, xmax=1.1, ymin=-0.2, ymax=0.2, axis x line=none, axis y line=none, xtick=\empty]
             \addplot[blue!80!black, domain=-1:1, variable=\t, quiver={u={cos(deg(pi/4))*t/\denom},v={sin(deg(pi/4))*t/\denom},scale arrows=0.1}, -stealth, samples=\samplesblue] ({t}, {0});
             \addplot [only marks,mark=*] coordinates { (0,0) };
             \draw[black, -] (axis cs:-1,0) -- (axis cs:1,0);
        \end{axis}
        \end{tikzpicture}
    \caption{$\alpha=\pi/4$.}
    \end{subfigure}
    \begin{subfigure}{\figwid\textwidth}
         \begin{tikzpicture}
         \begin{axis}[width=\textwidth, xmin=-1.1, xmax=1.1, ymin=-0.2, ymax=0.2, axis x line=none, axis y line=none, xtick=\empty]
             \addplot[blue!80!black, domain=-1:1, variable=\t, quiver={u={cos(deg(pi/2))*t/\denom},v={sin(deg(pi/2))*t/\denom},scale arrows=0.1}, -stealth, samples=\samplesblue] ({t}, {0});
             \addplot [only marks,mark=*] coordinates { (0,0) };
             \draw[black, -] (axis cs:-1,0) -- (axis cs:1,0);
        \end{axis}
        \end{tikzpicture}
    \caption{$\alpha=\pi/2$.}
    \end{subfigure}
    \caption{Harmonic example in 2D with singularity. In the first row, we show the well-known hedgehog singular solution (a) and then rotate it in $\pi/4$ and $\pi/2$, as shown in (b) and (c) respectively. The second row displays a line that passes through the middle of each corresponding figure in the first row.}
    \label{fig:analytic-2d}
\end{figure}

\subsubsection{Local minima in vector interpolation}
In this section we provide some examples of local minima, where another solutions can be found analytically for the rotation example shown in Section \ref{section:harmonic-2d}, but with higher energy. Using Lemma \ref{lemma:f}, it is not difficult to see that the function $\vec u(r, \theta)\coloneqq \mat Q_{\alpha(r)}\vec d_0(\theta)$ is a nematic liquid crystal, with its components defined as 

$$ Q_{\alpha(r)}\coloneqq \begin{bmatrix} \cosb{\alpha(r)} & -\sinb{\alpha(r)} \\ \sinb{\alpha(r)} & \cosb{\alpha(r)} \end{bmatrix}, \vec d_0(\theta) \coloneqq \begin{bmatrix} -\sinb{\theta} \\ \cos{\theta} \end{bmatrix}.$$
We consider this problem in a ring of internal and external radii of $0<\rho<R$, with the boundary conditions given by an external rotation of $0$ and an intenral rotation of $\alpha_0$. Indeed, after some elementary calculations we obtain that $\alpha(r) = \alpha_0 \frac{\log\left(r/R\right)}{\log\left(\rho/R\right)}.$
We show the solution of this problem for a rotation of $\pi$ and $3\pi$ in Figure \ref{fig:non-uniqueness}. 

\begin{figure}
    \centering
    \begin{subfigure}{0.49\textwidth}
        \centering
        \includegraphics[width=0.6\textwidth]{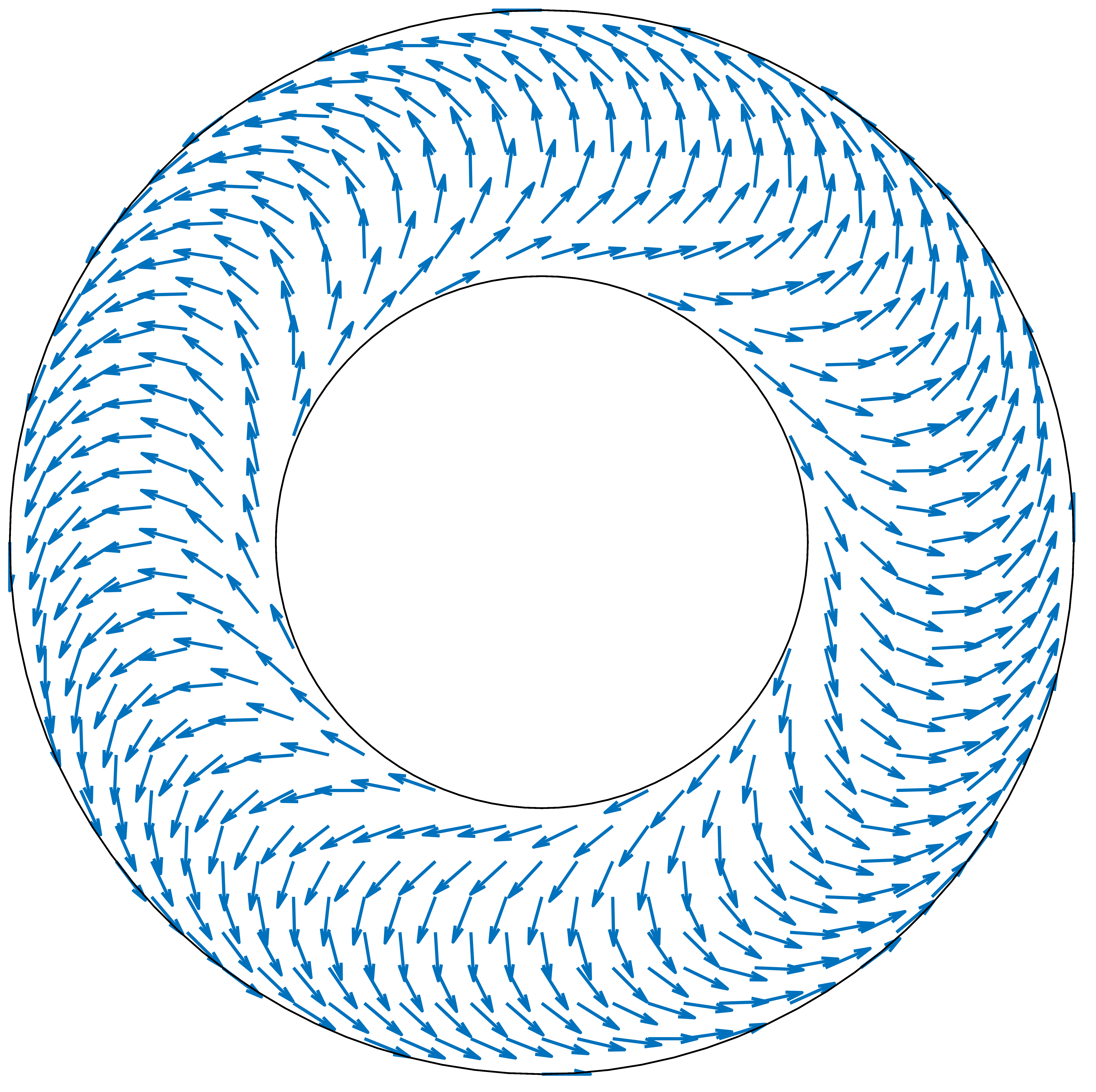}
        \caption{Rotation angle of $\pi$.}
    \end{subfigure}
    \begin{subfigure}{0.49\textwidth}
        \centering
        \includegraphics[width=0.6\textwidth]{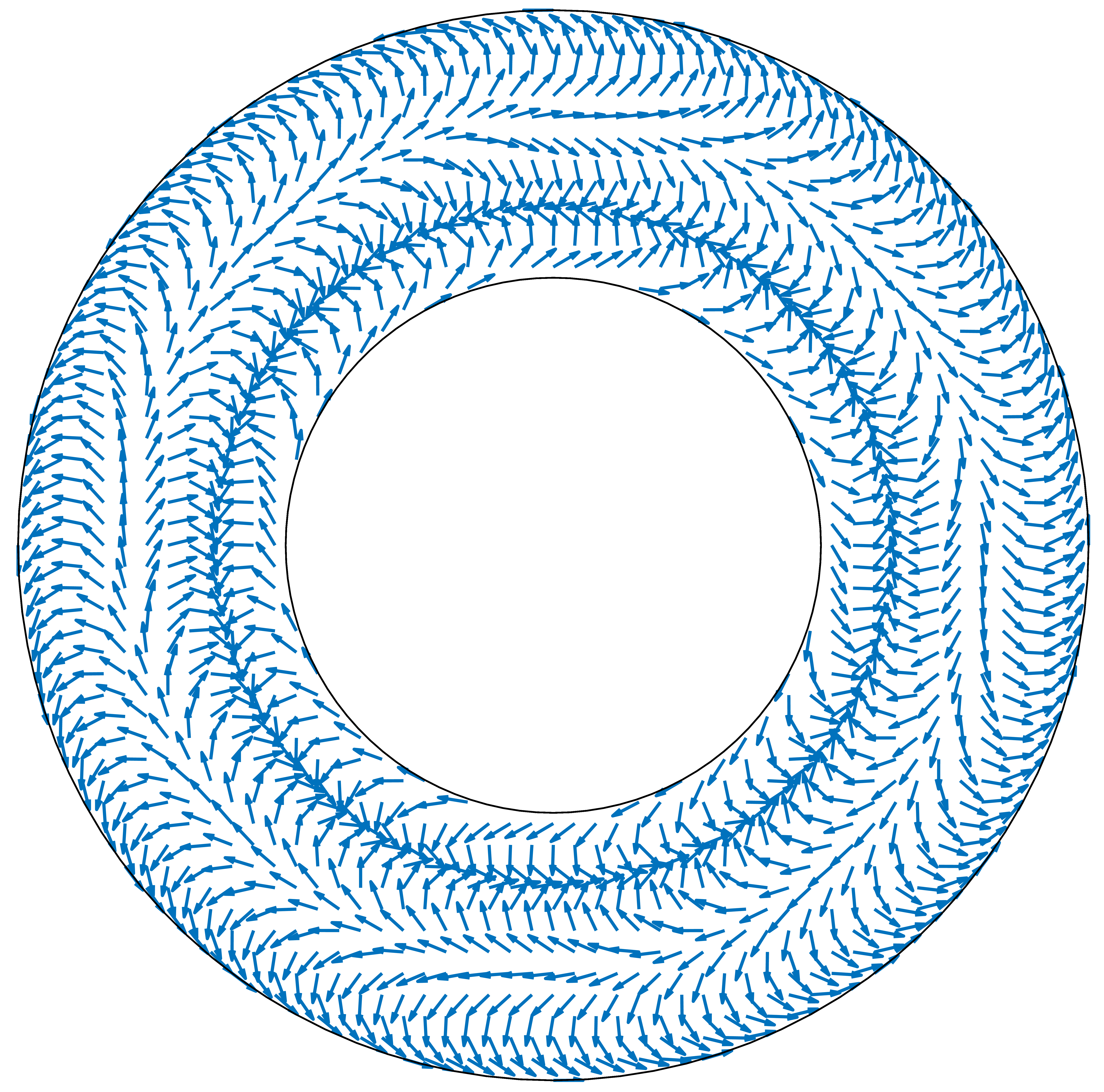}
        \caption{Rotation angle of $3\pi$.}
    \end{subfigure}
    \caption{Display of local minima, where in (a) we show the solution that behaves as in the 1D example for $\alpha_0=\pi$. In (b), we show that the solution can present additional rotations, where the point where the rotation becomes $2\pi$ has been depicted by a blue ring within the geometry (a).}
    \label{fig:non-uniqueness}
\end{figure}

\section{Solution methodology and numerical approximation}\label{section:methodology}
The scope of this section is twofold. On one hand, we show how to use Nitsche's trick to impose the boundary condition on the normal component. On the other one, we propose a preconditioned projected gradient descent method for solving the Frank-Oseen equations. As a result, we provide a complete overview of how this model can be implemented in practice. 
Throughout this section we will denote with $\mathbb P_k$ the space of $k$-th order finite elements. All of our numerical tests have been performed using the Firedrake library \cite{rathgeber2016firedrake} with the Algebraic Multigrid implementation from HYPRE, BoomerAMG \cite{falgout2002hypre}. 

\subsection{Imposing the boundary condition $\vec d\cdot \vec N=0$}
Complex geometries do not allow for an explicit computation of the normal vector $\vec N$, because of which imposing this condition is non trivial. We rely on Nitsche's trick to do this \cite{stenberg1995some}, which we derive for this model in the following. We note that this is very similar to the slip boundary condition used in Navier-Stokes equations \cite{FiniteElementVerfr1986}, with a Nitsche formulation available in \cite{NitschesMethoGjerde2021}. 

Define the functional spaces
    $$   \vec V_{\vec N} = \{\vec y \in \vec H^1(\Omega): \vec y = \vec N \quad \text{on $\Gamma_{D, \vec \tau}^{\vec d}$}\},\qquad \vec V_{\vec 0} = \{\vec y \in \vec H^1(\Omega): \vec y = \vec 0 \quad \text{on $\Gamma_{D, \vec \tau}^{\vec d}$}\}, \\
    $$
with which we now perform integracion by parts for the Laplace operator for $\vec d$ in $\vec V_{\vec N}$ and a test function $\vec v$ in $\vec V_{\vec 0}$: 
    $$ -\int_\Omega \Delta \vec d\cdot \vec v\,dx = \int_\Omega \grad \vec d:\grad \vec v\,dx - \int_{\Gamma_{D, \vec N}^{\vec d}} \vec v\cdot ([\grad \vec d]\vec N)\,ds = \int_\Omega \grad \vec d:\grad \vec v\,dx - \int_{\Gamma_{D, \vec N}^{\vec d}} \vec v\cdot ([\vec N\otimes \vec N][\grad \vec d]\vec N)\,ds,  $$
where the second equality comes from the Neumann boundary condition $ \Pi_{\vec \tau}[\grad \vec d]\vec N = \vec 0$. Restating  $\vec d\cdot \vec N=0$ as $[\vec N\otimes \vec N]\vec d = \vec 0$, to preserve the symmetry and ellipticity of the formulation we consider the following additional terms:
    $$ \int_{\Gamma_{D, \vec N}^{\vec d}} \left([\vec N\otimes \vec N]\vec d\right) \cdot \left([\grad \vec v]\vec N\right)\,ds, C\sum_{T\in \mathcal T_h}\int_{T}h_T^{-1}(\vec d\cdot N)(\vec v\cdot \vec N)\,ds,$$
where $\mathcal T_h$ stands for the geometry triangulation, $T$ is each element of the mesh, and $h_T$ is the element diameter.  These terms guarantees that the problem is elliptic for $C>0$ sufficiently large \cite{stenberg1995some}, where we have observed a value of $C=10$ to yield satisfactory results in practice. The resulting weak form of the Laplace operator is given by
    \begin{multline}\label{eq:laplace-nitsche}
            -\int_\Omega \Delta \vec d\cdot \vec v\,dx = \int_\Omega \grad \vec d:\grad \vec v\,dx - \int_{\Gamma_{D, \vec N}^{\vec d}} \left([\vec N\otimes \vec N]\vec v\right)\cdot ([\grad \vec d]\vec N)\,ds\\
            + \int_{\Gamma_{D, \vec N}^{\vec d}} \left([\vec N\otimes \vec N]\vec d\right) \cdot \left([\grad \vec v]\vec N\right)\,ds 
            + C\sum_{T\in \mathcal T_h}\int_{T}h_T^{-1}(\vec d\cdot N)(\vec v\cdot \vec N)\,ds\qquad\forall \vec v\in \vec V_{\vec 0}.
    \end{multline}

\subsection{The solution strategy}
We propose a solution strategy for solving problem \eqref{eq:fiber-min}. As we will show, standard Newton-based solvers for the first order conditions \eqref{eq:fiber-saddle} are not robust with respect to the initial condition. To alleviate this difficulty, we use a preconditioned projected gradient descent \cite{wright1999numerical} , and show that it is effective for solving both singularities and arbitrary initial conditions. Another disadvantage of the saddle-point formulation is that it requires the use of inf-sup stable discrete spaces \cite{boffi2013mixed}. This means that the vector must always be approximated with at least second order finite elements, as shown in \cite{ConstrainedOptAdler2016}.  The convergence of a finite element approximation has been shown in \cite{ConstrainedOptAdler2016} for the saddle-point formulation, but a similar approximation result for the minimization problem \eqref{eq:fiber-min} has not yet been established. 

\begin{remark}
The criticism of the saddle point formulation is slightly unfair. The use of higher order discretizations can be avoided using adequate stabilization terms or by devising an Uzawa scheme. Still, both cases require computing the Lagrange multiplier as an additional variable, which makes the model more expensive that necessary.
\end{remark}

We consider an initial guess given by a vector $\vec d^0$ such that $|\vec d^0|=1$ throughout the domain. We note that the gradient of $\Psi$ defined in \eqref{eq:fiber-min} is given by 
    $$ d\Psi(\vec d) = -\Delta \vec d, $$
which does not take into account the effect of the unit norm constraint. Recalling that $\lambda=-\frac 1 2|\grad \vec d|^2$, the residual of \eqref{eq:fiber-min} is given by
    $$ d\widetilde{\Psi}(\vec d) = -\Delta \vec d - |\grad \vec d|^2 \vec d, $$
which is a difficult problem as, in addition to it being nonlinear, the negative sign of $-|\grad \vec d|^2$ breaks the ellipticity. Because of this, we will consider a quasi-Newton inspired projected gradient descent, where we use $d\Psi$ instead of $d\widetilde{\Psi}$ for computing the Jacobian approximation, as it is a well understood operator. We show the resulting algorithm in Algorithm \ref{alg:ppgd}. The following comments are in place:
\begin{algorithm}[!ht]
\caption{Preconditioned projected gradient descent algorithm.}
\label{alg:ppgd}
\begin{algorithmic}[1]
  \STATE {\bf Input:} Initial point $\vec d^0$, tolerance \texttt{tol} and maximum iterations \texttt{maxit}
  \STATE Set $\texttt{error}=1$, $\texttt{k} = 0$, $\vec d^k = \vec d^0$
  \WHILE{$\texttt{error}>\texttt{tol}$ and $\texttt{k}<\texttt{maxit}$}
  \STATE Compute $\delta \vec d^k$ such that $\ten P\delta \vec d^k = -d\widetilde{\Psi}(\vec d^{k-1})$
  \STATE Update the current solution $\hat{\vec d^k} = \vec d^{k-1} + \delta \vec d^k$
  \STATE Project the current solution back to the solution space $\vec d^k = \Pi_{\mathbb S^2}^\epsilon(\hat{\vec d^k})$
  \STATE $\texttt{error} = \|d\widetilde{\Psi}(\vec d^k)\|_{\ell^2}$, $\texttt{k} = \texttt{k}+1$
  \ENDWHILE
\RETURN{Solution $\vec d = \vec d^k$}
\end{algorithmic}
\end{algorithm}

\begin{itemize}
    \item Given that we consider a possibly singular solution, the accuracy of the method can be easily improved using mesh adaptivity in the singularities, where $|\grad \vec d|$ goes to infinity.
    \item We have observed that a convergence criterion given by a relative gradient norm of $\|d\widetilde{\Psi}(\vec d^{k+1})\|_{\ell^2}\leq 10^{-8}\|d\widetilde{\Psi}(\vec d^{0})\|_{\ell^2}$ yields satisfactory results, where $\|\vec a\|_{\ell^2}\coloneqq \frac 1 N\sqrt{\sum_i a_i^2}$.
    \item The preconditioner $\ten P$ should be a good approximation of the Laplace operator $d\Psi$. For this, we consider the action of a multigrid preconditioner, which is known to be optimal \cite{MultigridMethoSchobe1999}.
    \item The Jacobian is the Laplace operator and not the Hessian of the constrained problem. Albeit unintuitive, we have observed this approach to be better than the standard one as it avoids unnecessary matrix reassembly.
    \item We have chosen to use only the action of a preconditioner instead of a full-blown linear solver for the increment equation. In our experience, nonlinear problems usually do not require the solution of tangent problems with high-precision, as they can \emph{oversolve} the linearized problem without improving the original nonlinear problem \cite{ParallelInexacBarnaf2022,barnafi2022analysis}. An additional benefit of this choice is that we have removed all parameters related to the linear solver, which makes our method less sensitive to parameter-tuning.
    \item Both $\Psi$ and $\widetilde{\Psi}$ and understood as being approximated with Nitsche's trick as in \eqref{eq:laplace-nitsche}.
\end{itemize}

Given that we will use this solver for complex geometries and boundary conditions, we would like that our solver is always able to yield a solution, i.e. robust. For this reason, we compare our solver with a state-of-the-art solver \cite{AugmentedLagraXiaJ2021} in two scenarios, one where we interpolate two boundary vectors for varying initial conditions, and another one where we try to solve a problem with a singularity. The considered solver is based on the saddle point formulation \eqref{eq:fiber-saddle}, and it consists in a Newton-Krylov scheme for an augmented Lagrangian formulation with a modified Jacobian. The preconditioner is given by a block-partitioned multigrid. We use their suggested value of $10^6$ for the augmented Lagrangian parameter. To get a better grasp of the difference in complexity of both solvers, we report the PETSc options used to set each of them. 

We conclude this section by studying the performance of the solver with respect to an increasing number of degrees of freedom, to see if it is adequate for its usage in an HPC environment. All tests contained in this section are performed in a unit square $\Omega=(0,1)^2$.


\paragraph{Sensitivity to initial conditions.} We set $\vec d= (0,-1)$ on the left boundary, $\vec d = (0,1)$ on the right boundary, and homogeneous Neumann conditions on the top and bottom. We consider an initial vector $\vec d^0=(\cos \theta, \sin \theta)$ for $\theta\in [0,2\pi]$, and plot the number of nonlinear iterations in Figure \ref{fig:initial-conditions}, with a mesh discretization of 20 elements per side. The missing points show that a solver did not converge, which only happened for the monolithic Newton-Krylov solver. In fact, our method converges for all the angles considered, whose iterations do not vary significantly between first and second order finite elements.

\begin{figure}[ht!]
    \centering
    \begin{tikzpicture}
        \begin{axis}[xlabel=$\theta$, ylabel=Nonlinear iters., width=0.7\textwidth, height=6cm, legend style={fill opacity=0.6, legend cell align=left, legend pos=north west}, legend columns=1]
            \addplot[red!70!black, style={ mark options={draw opacity=1.0, fill opacity=1.0, mark size=1pt}}, mark=*, line width=1.5] coordinates
            {(0.0, 20) (1.0, 20) (1.1, 19) (1.2, 20) (1.3,21) (1.4, 22) (1.5, 22) (1.6, 20) (1.7, 22) (1.8, 21) (1.9, 20) (2.0, 18) (2.1, 20) (3.10, 20) };
            \addplot[blue!70!black, style={ mark options={draw opacity=1.0, fill opacity=1.0, mark size=1pt}}, mark=*, line width=1.5] coordinates
            {(0.0, 18) (1.1, 18) (1.2, 19) (1.9, 19) (2.0,18) (3.10, 18)};
            \addplot[green!70!black, style={ mark options={draw opacity=1.0, fill opacity=1.0, mark size=1pt}}, mark=*, line width=1.5] coordinates
            {(0.0, 11) (0.1, 12) (0.2, 12) (0.3, 14) (0.4, 14)};
            \addplot[green!70!black, style={ mark options={draw opacity=1.0, fill opacity=1.0, mark size=1pt}}, mark=*, line width=1.5] coordinates
            {(0.9, 16)};
            \addplot[green!70!black, style={ mark options={draw opacity=1.0, fill opacity=1.0, mark size=1pt}}, mark=*, line width=1.5] coordinates
            {(2.0, 23)};
            \addplot[green!70!black, style={ mark options={draw opacity=1.0, fill opacity=1.0, mark size=1pt}}, mark=*, line width=1.5] coordinates
            {(2.5,24)};
            \addplot[green!70!black, style={ mark options={draw opacity=1.0, fill opacity=1.0, mark size=1pt}}, mark=*, line width=1.5] coordinates
            {(2.8, 20) (2.9, 12) (3.1, 12)};
            \legend{PPGD $\mathbb P_1$, PPGD $\mathbb P_2$, Monolithic}
        \end{axis}
    \end{tikzpicture}
    \caption{{\bf Sensitivity to initial conditions.} Number of nonlinear iterations incurred by the preconditioned projected gradient descent (PPGD) with first and second finite elements, and the monolithic solution of the saddle point problem with discretized with $[\mathbb P_2]^k \times \mathbb P_1$ finite elements. Missing points imply that the nonlinear solver did not converge.}
    \label{fig:initial-conditions}
\end{figure}
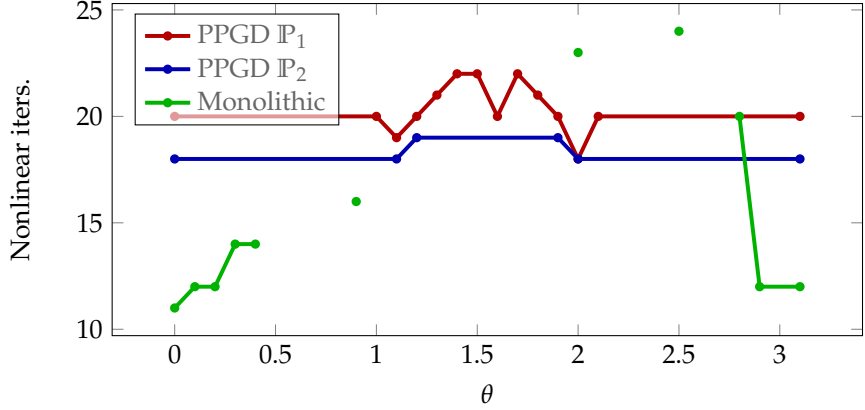

\paragraph{Singularity approximation.} This test has a similar setting to the previous one. The difference is that the top and bottom boundary conditions are also Dirichlet, with values $\vec d=(-1,0)$ and $\vec d=(1,0)$ respectively. This forces the appearance of a singularity in the center of the geometry as shown in Figure \ref{fig:singularity}. We perform the same sensitivity with respect to the initial condition in Figure \ref{fig:initial-conditions-singularity}, where we note that the saddle point model did not converge for any of the tested points. We also highlight that using 20 elements per side means that the point $(0.5, 0.5)$ is considered exactly in the model, which further increases the difficulty of this test. Indeed, the iteration numbers are much higher than in the previous case, but nevertheless the proposed solver is capable of solving the problem for all initial conditions with first order finite elements. Interestingly, this is not the case for second order elements, which presents 3 points in which the solver oscillated until the maximum number of allowed iterations was achieved. We note that the proposed strategy still has room for improvement, as we are not using any kind of stabilization technique, such as relaxation or line search. Nevertheless, it is interesting to note that this difference in performance between the saddle-point approach and ours stems from two things: the first one is the lack of robustness displayed in the previous test, but the second and most important reason is how we handle the unit constraint. While the monolithic approach becomes singular in ill-defined points, our smoothed projection procedure is well-defined for all vectors.

\begin{figure}[ht!]
    \centering
    \begin{tikzpicture}
        \begin{axis}[xlabel=$\theta$, ylabel=Nonlinear iters., width=0.7\textwidth, height=5cm, legend style={fill opacity=0.6, legend cell align=left, legend pos=north west}, legend columns=1]
            \addplot[red!70!black, style={ mark options={draw opacity=1.0, fill opacity=1.0, mark size=1pt}}, mark=*, line width=1.5] coordinates
            {(0.0,70)(0.1,73)(0.2,94)(0.3,69)(0.4,72)(0.5,172)(0.6,167)(0.7,163)(0.8,164)(0.9,166)(1.0,171)(1.1,177)(1.2,76)(1.3,69)(1.4,95)(1.5,73)(1.6,64)(1.7,72)(1.8,85)(1.9,111)(2.0,122)(2.1,127)(2.2,128)(2.3,129)(2.4,129)(2.5,128)(2.6,127)(2.7,123)(2.8,111)(2.9,85)(3.0,72)(3.1,64)};
            \addplot[blue!70!black, style={ mark options={draw opacity=1.0, fill opacity=1.0, mark size=1pt}}, mark=*, line width=1.5] coordinates
            {(0.1,185)(0.2,102)(0.3,149)(0.4,132)(0.5,133)(0.6,131)(0.7,127)(0.8,122)(0.9,126)(1.0,130)(1.1,134)(1.2,131)(1.3,144)(1.4,156)(1.5,302)};
            \addplot[blue!70!black, style={ mark options={draw opacity=1.0, fill opacity=1.0, mark size=1pt}}, mark=*, line width=1.5] coordinates
            {(1.7,152)(1.8,115)(1.9,90)(2.0,88)(2.1,85)(2.2,82)(2.3,79)(2.4,70)(2.5,83)(2.6,86)(2.7,89)(2.8,90)(2.9,116)(3.0,187)};
            \legend{PPGD $\mathbb P_1$, PPGD $\mathbb P_2$}
        \end{axis}
    \end{tikzpicture}
    \caption{{\bf Singularity appoximation.} Number of nonlinear iterations incurred by the preconditioned projected gradient descent (PPGD) with first and second finite elements, and the monolithic solution of the saddle point problem with discretized with $[\mathbb P_2]^k \times \mathbb P_1$ finite elements. Missing points imply that the nonlinear solver did not converge.}
    \label{fig:initial-conditions-singularity}
\end{figure}

\begin{figure}[ht!]
    \centering
    \begin{subfigure}{0.49\textwidth}
        \centering
        \includegraphics[width=0.9\textwidth]{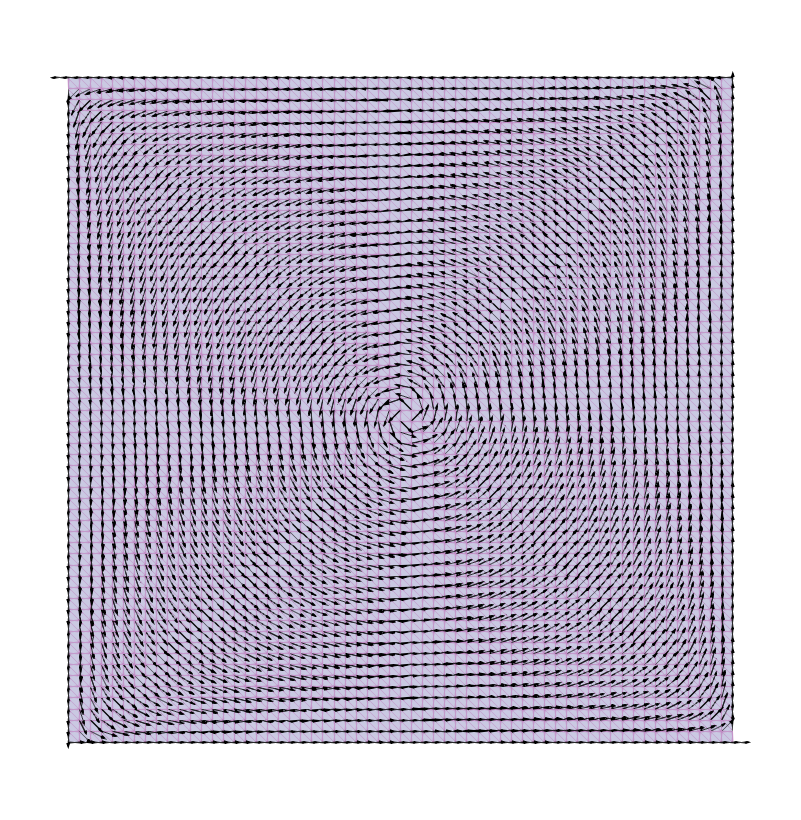}
    \end{subfigure}
    \begin{subfigure}{0.49\textwidth}
        \centering
        \includegraphics[width=0.9\textwidth]{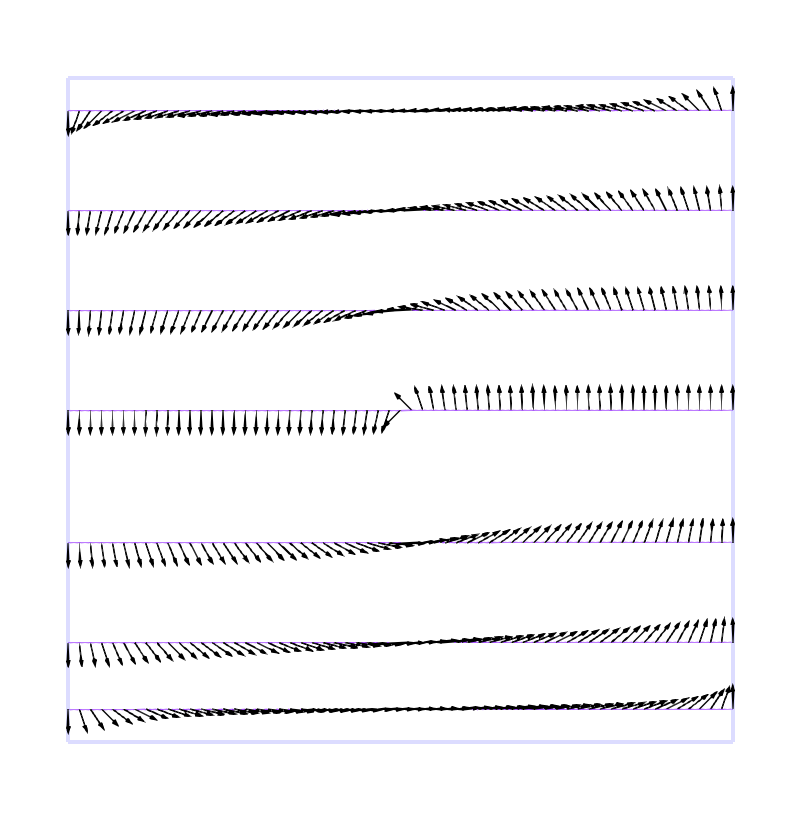}
    \end{subfigure}
    \caption{(a) Solution obtained when solving a problem that induces a singularity, computed with our proposed method. (b) Slices of lines $y=\{0.05, 0.15,0.3,0.5,0.65,0.8,0.95\}$ from the solution in (a) to display the rotation speed in bidimensional problems.}
    \label{fig:singularity}
\end{figure}

\paragraph{Optimality.} We come back to the first test, where we now perform sensitivity against the number of degrees of freedom and fix $\theta=0$. We show our results for first and second order finite elements in Figure \ref{fig:optimality}, where it can be seen how the method is optimal up to roughly more than 3 million degrees of freedom. Despite there being a small increase in the number of iterations, it is negligible as the number of degrees of freedom is growing exponentially. We note that all operations in the solver are adequate for parallelization, as there are only three types of operations required: one is residual assembly, the second one is vector operations, and the last one is the action of the preconditioner. The first one is produced (in Firedrake) by automatically generated code that is compiled upon execution for the given problem, which yields excellent peformace \cite{Pyop2AHighLRathge2012}. The second one relies on BLAS-1 operations, which are highly optimized \cite{lawson1979basic}, and the last one depends on the scalability properties of BoomerAMG, which are well-established \cite{falgout2002hypre}.

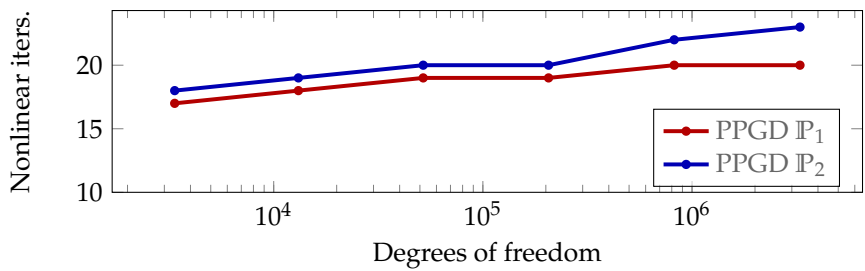
\begin{figure}
    \centering
    \begin{tikzpicture}
        \begin{semilogxaxis}[xlabel=Degrees of freedom, ylabel=Nonlinear iters., width=0.7\textwidth, height=4cm, ymin=10, legend style={fill opacity=0.6, legend cell align=left, legend pos=south east}, legend columns=1]
            \addplot[red!70!black, style={ mark options={draw opacity=1.0, fill opacity=1.0, mark size=1pt}}, mark=*, line width=1.5] coordinates
            {(3362, 17)(13122, 18)(51842, 19)(206082, 19)(821762, 20)(3281922, 20)};
            \addplot[blue!70!black, style={ mark options={draw opacity=1.0, fill opacity=1.0, mark size=1pt}}, mark=*, line width=1.5] coordinates
            {(3362, 18)(13122, 19)(51842, 20)(206082, 20)(821762, 22)(3281922, 23)};
            \legend{PPGD $\mathbb P_1$, PPGD $\mathbb P_2$}
        \end{semilogxaxis}
    \end{tikzpicture}
    \caption{Number of nonlinear iterations incurred by the preconditioned projected gradient descent (PPGD) with first and second finite elements for varying number of degrees of freedom (from 3K to 3.2M).}
    \label{fig:optimality}
\end{figure}

\section{Numerical tests}\label{section:application}
In this section we use Algorithm 1 to numerically study our model and compare it to the standard potential-based approach. For this purpose, we provide five tests. In the first one, we show how our model can be used to generate the fiber field of a left ventricle and compare the results to a potential based approach. In the second one, we compute the nematic error term $\vec S\cdot \vec f$ from hypothesis in \eqref{eq:error-nematic} to verify its validity. In the thrid one, we study the numerical convergence of the method. More specifically, we verify that an Aubin-Nitsche estimate holds, meaning that our approach converges quadratically, in contrast to the standard one that converges linearly. On the fourth test we show the impact of our model on a mechanical contraction benchmark.

\subsection{Application to left ventricle}
We will follow the steps outlined in Section \ref{section:model}. For each step, we show the analogous result with our proposed vector formulation.

\paragraph{Transmural distance and vector.} The model for the transmural potential $\phi_\Trans$ is given by \eqref{eq:poisson-scalar}, with boundary conditions
    $$ \phi_\Trans = 1 \quad\ton\,\,\Gamma_\Endo, \quad \phi_\Trans=0\quad\ton\,\,\Gamma_\Epi, \quad \grad \phi_\Trans\cdot \vec N = 0\quad\ton\,\,\Gamma_\Base.$$
In the vector model, this can be restated as finding a nematic liquid crystal vector $\vec d_\Trans$ with the boundary conditions
    $$ \vec d_\Trans = -\vec N\quad\ton\,\, \Gamma_\Endo, \quad \vec d_\Trans = \vec N\quad\ton\,\,\Gamma_\Epi,\quad \vec d_\Trans\cdot \vec N=\vec 0\quad\ton\,\,\Gamma_\Base, \quad \Pi_{\vec \tau}[\grad \vec d_\Trans]\vec N = \vec 0\quad\ton\,\,\Gamma_\Base.$$
We stress that in the vector formulation we do not compute a transmural distance as it is not necessary.
\paragraph{Apicobasal vector.} The model for the potential $\phi_\AB$ is given by the boundary conditions
    $$ \phi_\AB=1\quad\ton\,\,\Gamma_{\texttt{apex}}, \quad \phi_\AB=0\quad\ton\,\,\Gamma_\Base, \quad \grad\phi_\AB\cdot \vec N = 0\quad\text{elsewhere},$$
where $\Gamma_{\texttt{apex}}$ is a single node that represents the apex. In the vector setting, using the computations for the singularity from Section \ref{section:bcs} we obtain
    $$ \vec d_\AB = \vec N\quad\ton\,\,\Gamma_\Base,\quad \vec d_\AB = \vec 0\quad\ton\,\,\Gamma_\texttt{apex}, \quad \vec d_\AB\cdot \vec N=\vec 0\quad\ton\,\,(\Gamma_\Base\cup\Gamma_\texttt{apex})^c, \quad \Pi_{\vec \tau}[\grad \vec d]\vec N = \vec 0\quad\ton\,\,(\Gamma_\Base\cup\Gamma_\texttt{apex})^c.$$
The resulting apicobasal vector is required to be orthogonal to the transmural vector as $\vec d_\AB \leftarrow \vec d_\AB - \langle \vec d_\AB, \vec d_\Trans\rangle \vec d_\Trans$.

\paragraph{Transversal vector, fibers and cross-fibers.} The transversal vector is simply given by the cross product of the previous two $\vec d \coloneqq \vec d_\Trans \times \vec d_\AB$. In the potential framework, the fiber field is computed by  means of the transmural distance by interpolating the angles as detailed in Section \ref{section:model}, which results in $\vec f(\phi_\Trans) \coloneqq \mat Q \mat R(\phi_\Trans) \mat Q^T \vec d$. In the vector setting, we can instead impose the rotated fields as a boundary condition, so that we then solve the Frank-Oseen system under assumption \eqref{eq:error-nematic} with boundary conditions
    $$ \vec f = \mat Q \mat R(1) \mat Q^T \vec d\quad\ton\,\,\Gamma_\Endo, \quad \vec f=\mat Q\mat R(0)\mat Q^T\vec d\quad\ton\,\,\Gamma_\Epi,\quad [\grad\vec f]\vec N= \vec 0\quad\ton\,\,\Gamma_\Base.$$
We study the impact of term $\vec S\cdot \vec f$ in Section \ref{section:null-force}
We highlight that the interpolation of the boundary conditions within the tissue is handled automatically by the Frank-Oseen model. The cross-fiber direction can then be computed as $\vec d_{\texttt{cross}} = \vec d_\AB - \langle \vec d_\AB, \vec f\rangle \vec f$. The cross-fiber and transmural vectors are commonly referred to as sheet $\vec s$ and normal $\vec n$ directions.

We show the resulting fiber fields in Figure \ref{fig:vectors-lv}, together with the difference between the potential and Frank-Oseen formulations measured with the angle between both vector fields as    
    \begin{equation}\label{eq:angle-error}
        \texttt{angle} = \left|\arccos \langle \vec f_\texttt{pot}, \vec f_\texttt{FO}\rangle \right|,
    \end{equation}
where $\vec f_\texttt{pot}$ stands for the solution using the potential method, and $\vec f_\texttt{FO}$ the one using the Frank-Oseen model. The errors are very small for the transmural vector field, given by at most $14^\circ$. The apicobasal vector fields again roughly coincide, with the main difference appearing around the apex, where the singularity arises. The fiber field error presents a more complex structure, for which we have enlarged the resolution of the color bar. Despite the error being small and up to roughly $30^\circ$ within the tissue, the error is mainly concentrated around the apex, as in the apicobasal case.

\begin{figure}[ht!]
    \centering
    \def\wid{0.25}
    \begin{subfigure}[b]{\wid\textwidth}
        \includegraphics[width=\textwidth]{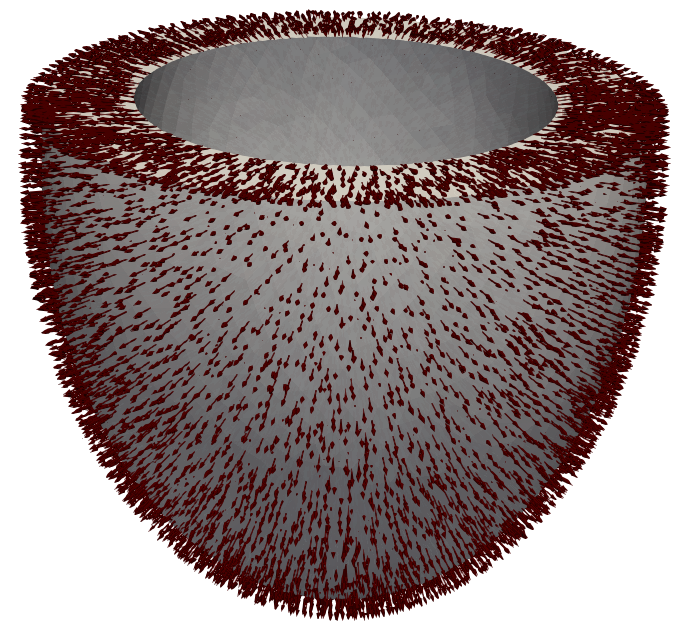}
        \caption{}
    \end{subfigure}
    \begin{subfigure}[b]{\wid\textwidth}
        \includegraphics[width=\textwidth]{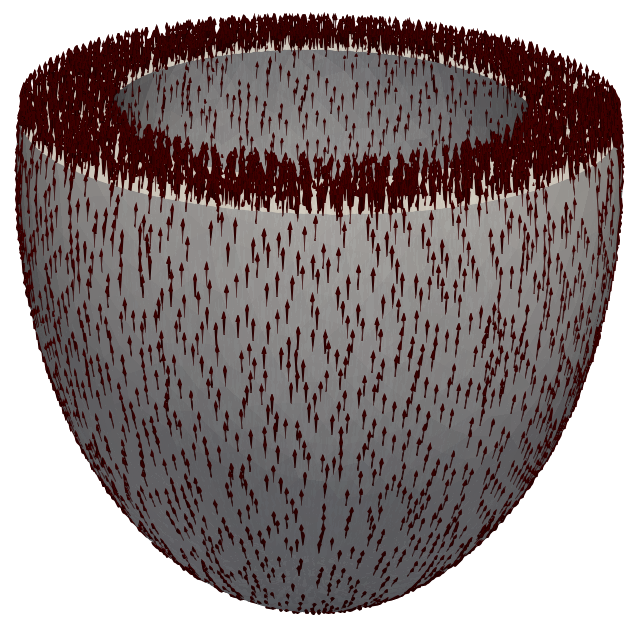}
        \caption{}
    \end{subfigure}
    \begin{subfigure}[b]{\wid\textwidth}
        \includegraphics[width=\textwidth]{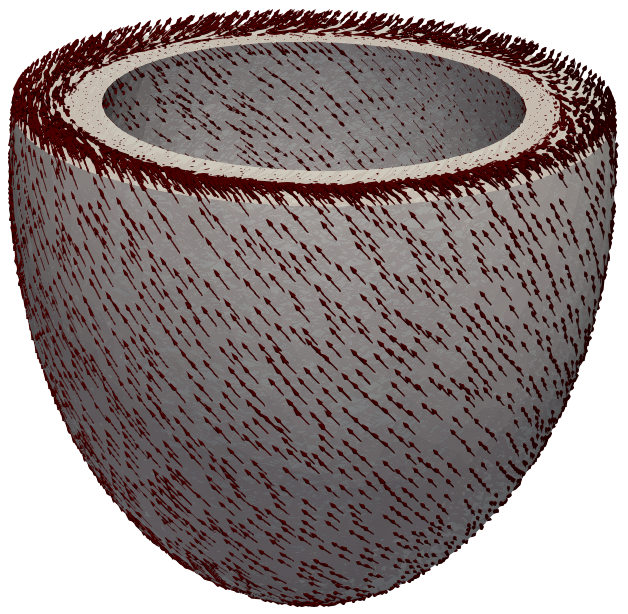}
        \caption{}
    \end{subfigure}

    \begin{subfigure}[b]{\wid\textwidth}
        \includegraphics[width=\textwidth]{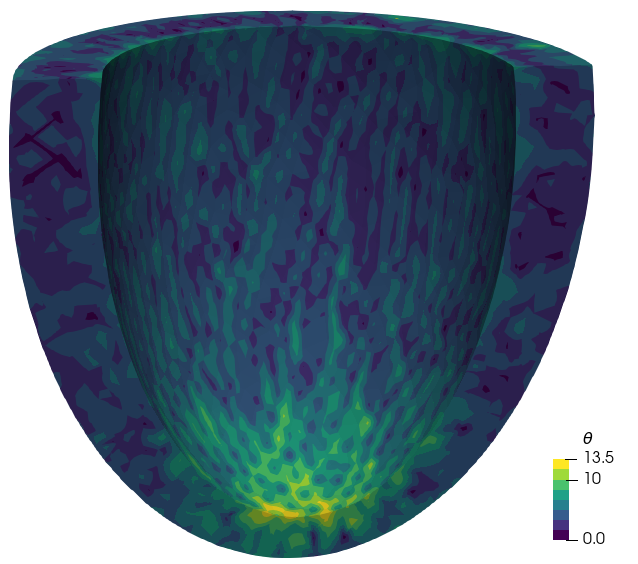}
        \caption{}
    \end{subfigure}
    \begin{subfigure}[b]{\wid\textwidth}
        \includegraphics[width=\textwidth]{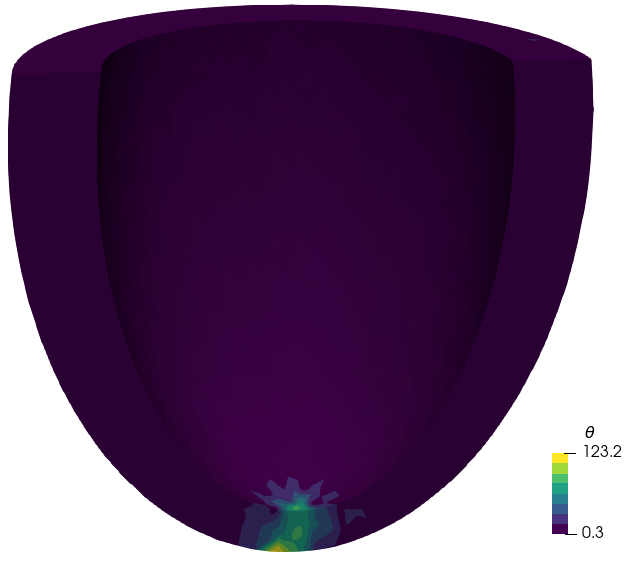}
        \caption{}
    \end{subfigure}
    \begin{subfigure}[b]{\wid\textwidth}
        \includegraphics[width=\textwidth]{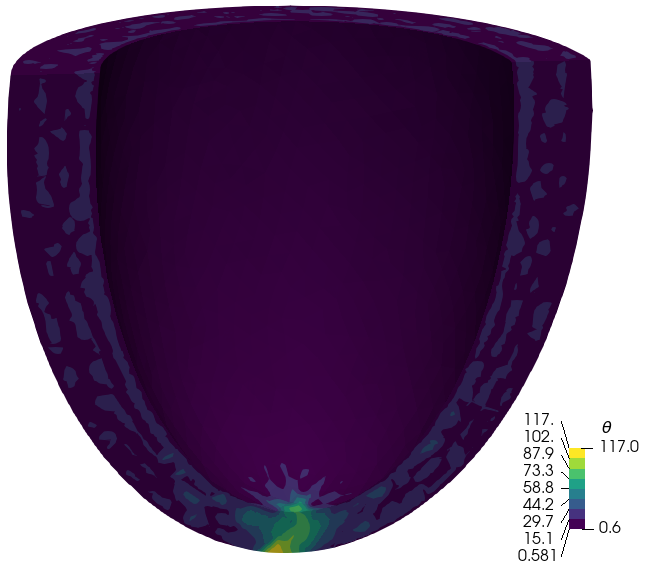}
        \caption{}
    \end{subfigure}
    \caption{Resulting vector fields in a left ventricle geometry computed as nematic liquid crystals. The errors are given by \eqref{eq:angle-error}. (a) Transmural vector field, (b) apicobasal vector field, (c) fiber vector field, (d) transmural error in degrees, (e) apicobasal error in degrees, (f) fiber error in degrees.}
    \label{fig:vectors-lv}
\end{figure}

\subsection{Impact of the forcing term $\vec S\cdot \vec f$}\label{section:null-force}
To compute the fiber field $\vec f$ without using the weight $\phi_\Trans$, we require the hypothesis $\vec S\cdot\vec f= 0$, as in \eqref{eq:error-nematic}. To explore if this is a strong or mild requirement, we have computed this quantity in all of $\Omega$ and plotted it in Figure \ref{fig:error-nematic}, where it can be seen that this quantity is comparable to the machine error. To avoid overly complicated formulas, we have computed the solution using $\mathbb P_2$ elements, so that the Laplace terms are not null by construction, which makes it surprising to see that indeed the computed term is close to machine precision. We conjecture that this term is actually zero, but a proof remains to be found.

\begin{figure}[ht!]
    \centering
    \includegraphics[width=0.4\textwidth]{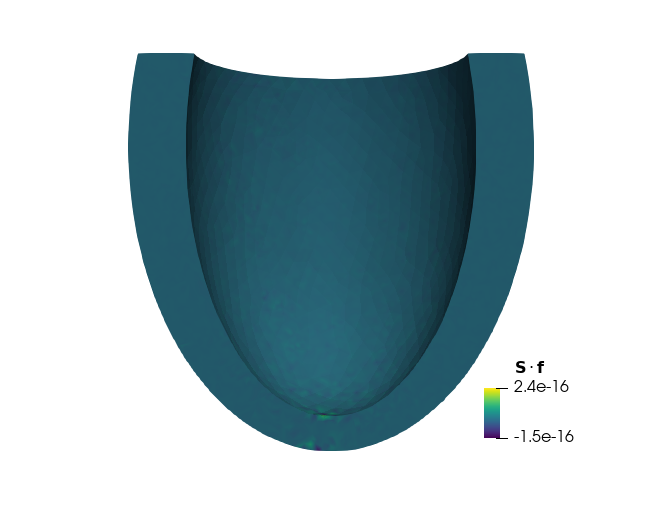}
    \caption{Forcing vector projection $\vec S\cdot \vec f$ arising from the equations governing the fiber vector field $\vec f$.}
    \label{fig:error-nematic}
\end{figure}

\subsection{Order of convergence}
One important practical advantage of our model is that it yields the relevant variable, i.e. the fibers, as a primary variable. In the RBM formulation, the cardiac fibers are computed using the gradient of a potential, which we will denote for now as $\vec d_\texttt{RBM}\coloneqq \grad \phi$. The convergence of a FEM approximation for such scheme is given in terms of the potential $\phi$, which is well-known to converge linearly \cite{quarteroni2008numerical}:
    $$ \| \vec d_\texttt{RBM}^h\|_{L^2} = \|\grad \phi^h \|_{L^2} \leq \| \phi^h \|_{H^1} \leq C h, $$
where we have denoted by $d_\texttt{RBM}^h, \phi^h$ a finite element approximation of the continuous functions $\vec d_\texttt{RBM},\phi$. This estimate still holds for the fiber field computed with our method, which we will denote by $\vec d_\texttt{FO}$:
    $$ \| \vec d_\texttt{FO}^h\|_{\vec H^1} = \leq C h, $$

 but the main difference between both approaches lies in the possibility of using the Aubin-Nitsche technique, which yields the convergence of the $L^2$ norm:
    $$ \| \vec d_\texttt{FO}^h \|_{L^2} \leq C h^2. $$
In other words, our approach yields quadratic convergence, not linear.  We have indeed verified numerically that this estimate holds, by testing the convergence of problem \eqref{eq:fo-1d} in a 2D square geometry. We show the results for up to roughly $130\,000$ degrees of freedom in Figure \ref{fig:aubin-nitsche}, where the quadratic convergence can be clearly observed.

\begin{figure}[ht!]
    \centering
    \begin{tikzpicture}
        \begin{loglogaxis}[xlabel=Mesh size, ylabel=$\vec L^2$ error, width=0.7\textwidth, height=4cm, legend style={fill opacity=0.6, legend cell align=left, legend pos=north west}, legend columns=1]
            \addplot[red!50!white, style={ mark options={draw opacity=1.0, fill opacity=1.0, mark size=1pt}}, mark=*, line width=1.5] coordinates
            { (0.3535533905932738, 0.014032474051090302)
              (0.1767766952966369, 0.0035173226930468654)
              (0.08838834764831845, 0.0008799133247377382)
              (0.04419417382415922, 0.00022001488699289378)
              (0.02209708691207961, 5.5006009375594595e-05)
              (0.011048543456039806, 1.3751645365006846e-05)
              (0.005524271728019903, 3.4379211200969684e-06) };
            \addplot[blue!50!white, style={ mark options={draw opacity=1.0, fill opacity=1.0, mark size=0pt}}, mark=*, line width=1.5] coordinates
            { (0.3535533905932738,   0.3535533905932738^2 )
              (0.1767766952966369,   0.1767766952966369^2  )
              (0.08838834764831845,  0.08838834764831845^2 )
              (0.04419417382415922,  0.04419417382415922^2 )
              (0.02209708691207961,  0.02209708691207961^2 )
              (0.011048543456039806, 0.011048543456039806^2)
              (0.005524271728019903, 0.005524271728019903^2) };
            \legend{$L^2$ error, $O(h^2)$}
        \end{loglogaxis}
    \end{tikzpicture}
    \caption{$\vec L^2$ error of a FEM approximation to the Frank-Oseen problem with respect to the mesh size.}
    \label{fig:aubin-nitsche}
\end{figure}
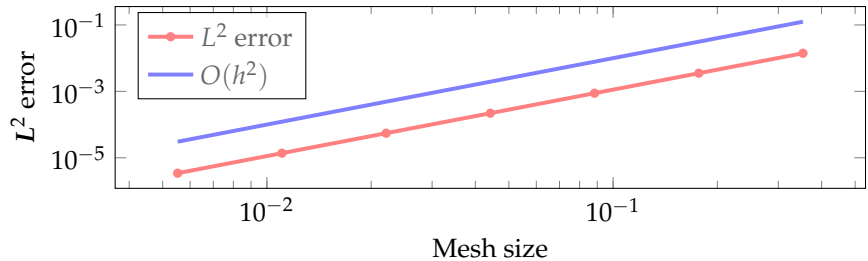

\subsection{Apex displacement comparison}
We finally studied the impact of using both fiber generation models on a cardiac mechanical simulation. Given that the point where both methods differ the most is the apex, we study the apex displacement on a benchmark contraction problem \cite{VerificationOfLand2015}. Details of the benchmark test can be found in the provided reference, and we display the deformation at $0$, $0.1$, and $0.2$ seconds at Figure \ref{fig:contraction} (a-c). We show the apex displacement magnitude in Figure \ref{fig:contraction} (d), where there is a clear difference between both methods. In particular, the displacement obtained with the Frank-Oseen model presents roughly a $25\%$ reduction in the apex displacement. This can be explained by the fact that our method is able to compute a singularity and setting the solution to $\vec 0$ in the singular point, whereas the standard approach computes an arbitrary vector at the apex, which yields an artificial active contraction force.

\begin{figure}[ht!]
    \centering
    \begin{subfigure}{0.2\textwidth}
        \centering
        \includegraphics[width=\textwidth]{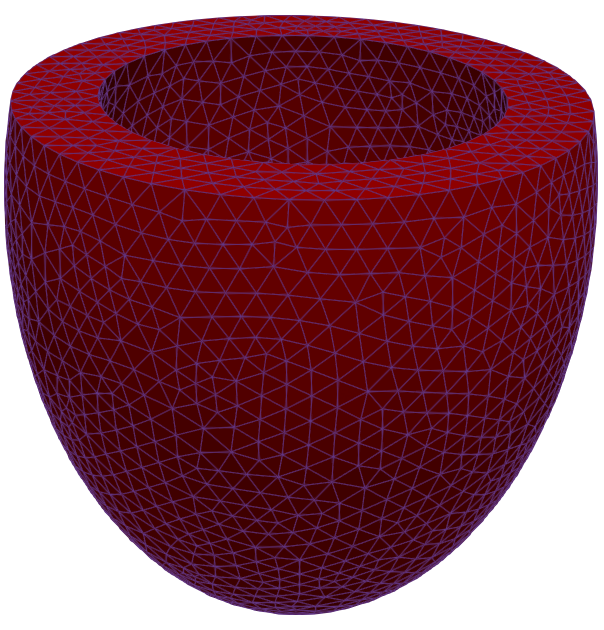}
        \caption{}
    \end{subfigure}
    \begin{subfigure}{0.2\textwidth}
        \centering
        \includegraphics[width=\textwidth]{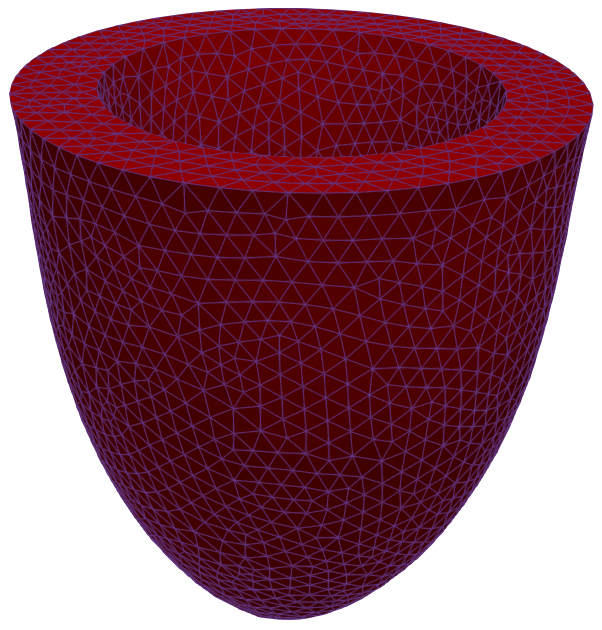}
        \caption{}
    \end{subfigure}
    \begin{subfigure}{0.2\textwidth}
        \centering
        \includegraphics[width=\textwidth]{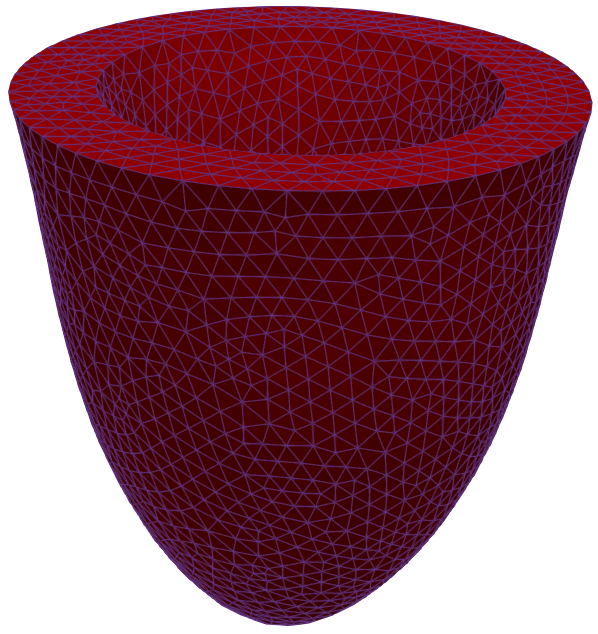}
        \caption{}
    \end{subfigure}

    \begin{subfigure}{\textwidth}
        \centering
        \includegraphics[width=\textwidth]{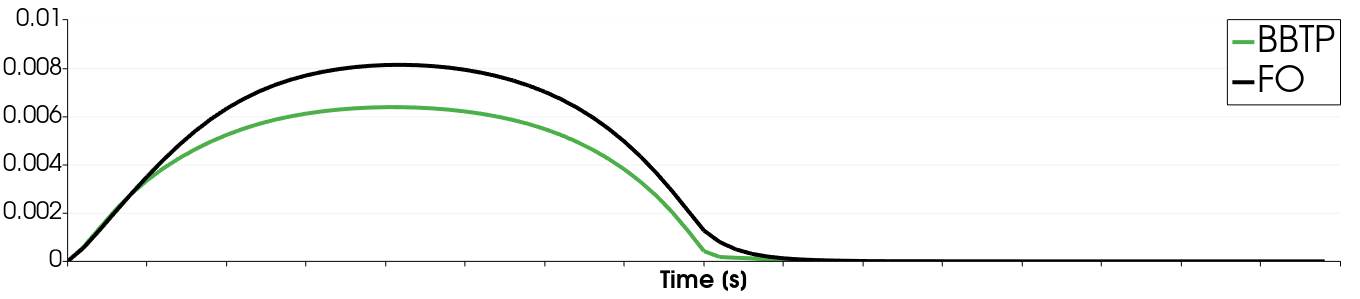}
        \caption{}
    \end{subfigure}
    \caption{Benchmark contraction with homogeneous boundary counditions at the base at (a) $t=0.0$ seconds, (b) $t=0.1$ seconds, and (c) $t=0.3$ seconds. We also show the apex displacement (d) obtained with our model (FO) and with the potential based one (BBTP).}
    \label{fig:contraction} 
\end{figure}

\section{Topological defects in cardiac fibers}\label{section:singularities}
We present some important ideas regarding the existence of singularities in the solution of the Frank-Oseen equation. All of the results referred in this section and additional details can be found in \cite{LiquidCrystalsBall2017,OrientationOfTang2017}.

Nematic liquid crystals can present a variety of singularities, understood as zones where the gradient of the crystal orientation varies unboundedly, meaning that $\lambda\to\infty$. They can be characterized with a quantity known as topological charge, and a thorough characterization of the topological defects in 2D has been done in \cite{OrientationOfTang2017}. Nevertheless, our problem \eqref{eq:fiber-min} is subject to the one-constant hypothesis, which drastically simplifies it. Two fundamental results for our case are (i) in the one-constant scenario, the set of singularities is given by a finite number of points \cite{Schoen1982regularity} and (ii) beyond the one-constant hypothesis the set of singularities has zero one-dimensional Hausdorff measure \cite{Hardt1988stable}. Note that the Hausdorff measure is an outer measure, meaning that, at least intuitively, the set of singularities in such case can be thought as an enumerable set of points at most, but not a line. 

The derivations from Section \ref{section:model} suggest that cardiac fibers behave as a nematic liquid crystal under the one-constant hypothesis, meaning that we should expect to see at most a discrete set of singularities in the solution. In \cite{TheNematicChiAuriau2022}, the authors show that by analyzing normal axis cuts of DT-MRI images it can be observed that the heart has two types of singularities. The first one is the well-known spiral orientation around the left-ventricle apex, which is typically recovered using RBMs and is indeed a pointwise defect, which we show in Figure \ref{fig:apex-singularity}. The second one is a line that originates three different main orientations, known as a disclination of index -1/2. This is observed at the junction of the ventricles and the intra-ventricular septum. Singularities of non-integer index do not arise from a direction-biased model such as Frank-Oseen and Ericksen, but instead requires a more general model such as the Landau-De Gennes. In such cases, the smoothness of the vector field in Frank-Oseen avoids the singularity by "escaping" to the third dimension.

\begin{figure}
    \centering
    \includegraphics[width=0.3\textwidth]{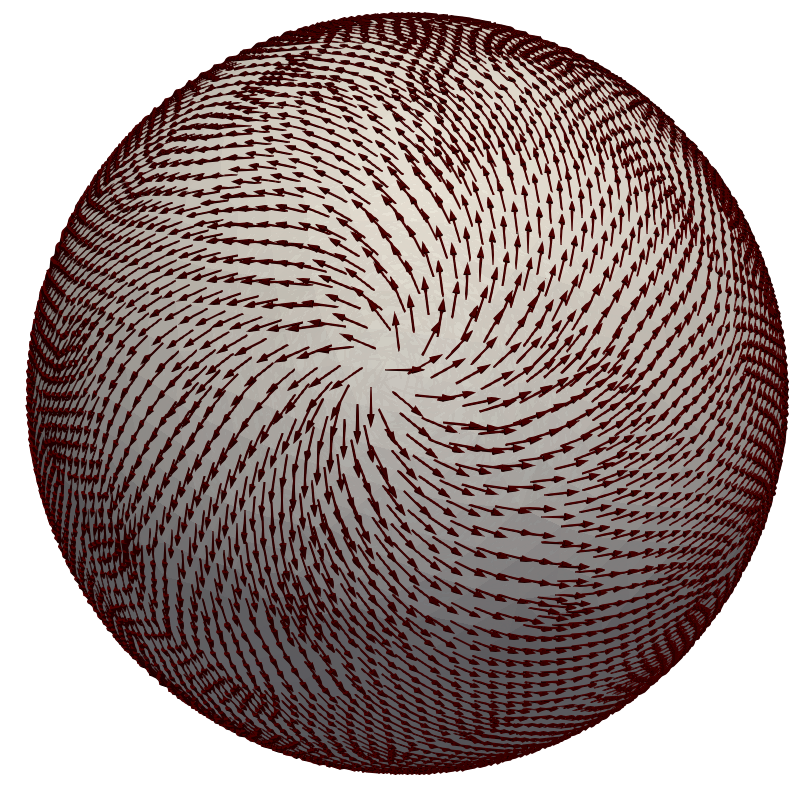}
    \caption{Spiral shaped singularity at the left ventricle apex.}
    \label{fig:apex-singularity}
\end{figure}


\section{Discussion and conclusions}\label{section:discussion}
In this work we have derived a new model for computing the fiber basis that gives rise to anisotropy in the human ventricles. The model can be seen as \emph{the} fiber PDE, which was computed starting from the RBMs and resulted in the equations used to model a nematic liquid crystal, under the one-constant hypothesis. This approach is more computationally challenging than traditional ones, as it involves the solution of a constrained nonlinear PDE, but in return it yields a fiber field that is much more accurate. We have also devised a precoditioned projected gradient descent solver that is efficient, robust and scalable for a wide range of scenarios, but its theoretical properties remain yet to be explored. This approach seems much better suited for its usage as a black-box solver than the existing ones based on the saddle point formulation.  From a physical perspective, this model can be seen as the first mathematical model for anisotropy in living tissue, as recently observed in experiments \cite{LiquidCrystalsHirst2017, TheNematicChiAuriau2022}. This opens possible future research in which other fibrous structures could be modeled within this framework, such as the skeletal muscle, the dermis, and even the DNA at a molecular level \cite{BiologicalTissSawT2018}. 

Due to the considerations exposed in Section \ref{section:singularities}, we note that this model is not useful for cardiac simulations beyond the left-ventricle case. This is due to the directionality bias that is imposed by the Frank-Oseen, which does not allow for non-integer singularities to arise. This was indeed the scope of an \emph{ad-hoc} technique developed in \cite{ANovelRuleBaBayer2012} known as \emph{bislerp}, which has the objective of providing a quaternion interpolation technique that is independent of the orientation of the vector. The natural mathematical framework for such behavior in the liquid crystals theory is using the non-directed tensor $\vec f\otimes \vec f$ instead of the vector $\vec f$, which is central to the $Q$-tensor theory in the Landau-De Gennes theory. Adapting this framework for large-scale simulations of the human heart will be the main objective of our future work.  

\section*{Acknowledgments}
NAB was supported by Centro de Modelamiento Matematico (CMM), Proyecto Basal FB210005, and by ANID, Proyecto 3230326. AO was funded by ANID-Fondecyt 1201311, 1231404, CMM FB210005 Basal-ANID, FONDAP/15110009, Math-Amsud MATH190008, DO (ANID) DO210001
and Millennium Science Initiative Programs NCN19-161 and ICN2021-004.
\bibliography{main}

\newcommand{\etalchar}[1]{$^{#1}$}
\begin{thebibliography}{RHGP{\etalchar{+}}22}

\bibitem[AEMM16]{ConstrainedOptAdler2016}
JH~Adler, DB~Emerson, SP~MacLachlan, and TA~Manteuffel.
\newblock Constrained optimization for liquid crystal equilibria.
\newblock {\em SIAM Journal on Scientific Computing}, 38:B50--B76, 1 2016.

\bibitem[AUJ22]{TheNematicChiAuriau2022}
J~Auriau, Y~Usson, and P-S Jouk.
\newblock The nematic chiral liquid crystal structure of the cardiac
  myoarchitecture: Disclinations and topological singularities.
\newblock {\em Journal of Cardiovascular Development and Disease}, 9:371, 10
  2022.

\bibitem[Bal17]{LiquidCrystalsBall2017}
JM~Ball.
\newblock {\em Liquid Crystals and Their Defects}, pages 1--46.
\newblock Springer International Publishing, 9 2017.

\bibitem[BB18]{BlockPreconditBeik2018}
FPA Beik and M~Benzi.
\newblock Block preconditioners for saddle point systems arising from liquid
  crystal directors modeling.
\newblock {\em Calcolo}, 55, 9 2018.

\bibitem[BBF13]{boffi2013mixed}
D~Boffi, F~Brezzi, and M~Fortin.
\newblock {\em Mixed finite element methods and applications}, volume~44.
\newblock Springer, 2013.

\bibitem[BBPT12]{ANovelRuleBaBayer2012}
JD~Bayer, RC~Blake, G~Plank, and NA~Trayanova.
\newblock A novel rule-based algorithm for assigning myocardial fiber
  orientation to computational heart models.
\newblock {\em Annals of Biomedical Engineering}, 40:2243--2254, 10 2012.

\bibitem[BHPS22]{barnafi2022analysis}
NA~Barnafi, NMM Huynh, LF~Pavarino, and S~Scacchi.
\newblock Analysis and numerical validation of robust parallel nonlinear
  solvers for implicit time discretizations of the bidomain equations.
\newblock {\em arXiv preprint arXiv:2209.05193}, 2022.

\bibitem[BPS22]{ParallelInexacBarnaf2022}
NA~Barnafi, LF~Pavarino, and S~Scacchi.
\newblock Parallel inexact newton\textendash{}krylov and quasi-newton solvers
  for nonlinear elasticity.
\newblock {\em Computer Methods in Applied Mechanics and Engineering},
  400:115557, 10 2022.

\bibitem[DSIB{\etalchar{+}}19]{ARuleBasedMeDoste2019}
R~Doste, D~Soto-Iglesias, G~Bernardino, A~Alcaine, R~Sebastian,
  S~Giffard-Roisin, M~Sermesant, A~Berruezo, D~Sanchez-Quintana, and O~Camara.
\newblock A rule-based method to model myocardial fiber orientation in cardiac
  biventricular geometries with outflow tracts.
\newblock {\em International Journal for Numerical Methods in Biomedical
  Engineering}, 35:e3185, 4 2019.

\bibitem[Eva22]{Evans2022partial}
LC~Evans.
\newblock {\em Partial differential equations}, volume~19.
\newblock American Mathematical Society, 2022.

\bibitem[FY02]{falgout2002hypre}
RD~Falgout and UM~Yang.
\newblock hypre: A library of high performance preconditioners.
\newblock In {\em International Conference on Computational Science}, pages
  632--641. Springer, 2002.

\bibitem[GS21]{NitschesMethoGjerde2021}
I~Gjerde and L~Scott.
\newblock Nitsche's method for navier\textendash{}stokes equations with slip
  boundary conditions.
\newblock {\em Mathematics of Computation}, 91:597--622, 11 2021.

\bibitem[HC17]{LiquidCrystalsHirst2017}
LS~Hirst and G~Charras.
\newblock Liquid crystals in living tissue.
\newblock {\em Nature}, 544:164--165, 4 2017.

\bibitem[HKL88]{Hardt1988stable}
R~Hardt, D~Kinderlehrer, and F-H Lin.
\newblock Stable defects of minimizers of constrained variational principles.
\newblock In {\em Annales de l'Institut Henri Poincar{\'e} C, Analyse non
  lin{\'e}aire}, volume~5, pages 297--322. Elsevier, 1988.

\bibitem[HTW09]{ASaddlePointHuQi2009}
Q~Hu, X-C Tai, and R~Winther.
\newblock A saddle point approach to the computation of harmonic maps.
\newblock {\em SIAM Journal on Numerical Analysis}, 47:1500--1523, 1 2009.

\bibitem[KST{\etalchar{+}}11]{ModelingAtrialKruege2011}
MW~Krueger, V~Schmidt, C~Tob\'{o}n, FM~Weber, C~Lorenz, DUJ Keller,
  H~Barschdorf, M~Burdumy, P~Neher, G~Plank, K~Rhode, G~Seemann,
  D~Sanchez-Quintana, J~Saiz, R~Razavi, and O~D\"{o}ssel.
\newblock {\em Modeling Atrial Fiber Orientation in Patient-Specific
  Geometries: A Semi-automatic Rule-Based Approach}, pages 223--232.
\newblock Springer Berlin Heidelberg, 2011.

\bibitem[LGA{\etalchar{+}}15]{VerificationOfLand2015}
S~Land, V~Gurev, S~Arens, CM~Augustin, L~Baron, R~Blake, C~Bradley, S~Castro,
  A~Crozier, M~Favino, TE~Fastl, T~Fritz, H~Gao, A~Gizzi, BE~Griffith,
  DE~Hurtado, R~Krause, X~Luo, MP~Nash, S~Pezzuto, G~Plank, S~Rossi,
  D~Ruprecht, G~Seemann, NP~Smith, J~Sundnes, JJ~Rice, N~Trayanova, D~Wang,
  Z~Jenny~Wang, and SA~Niederer.
\newblock Verification of cardiac mechanics software: benchmark problems and
  solutions for testing active and passive material behaviour.
\newblock {\em Proceedings of the Royal Society A: Mathematical, Physical and
  Engineering Sciences}, 471:20150641, 12 2015.

\bibitem[LHKK79]{lawson1979basic}
CL~Lawson, RJ~Hanson, DR~Kincaid, and FT~Krogh.
\newblock Basic linear algebra subprograms for fortran usage.
\newblock {\em ACM Transactions on Mathematical Software (TOMS)},
  5(3):308--323, 1979.

\bibitem[Lin89]{NonlinearTheorLinF1989}
F-H Lin.
\newblock Nonlinear theory of defects in nematic liquid crystals; phase
  transition and flow phenomena.
\newblock {\em Communications on Pure and Applied Mathematics}, 42:789--814, 9
  1989.

\bibitem[MZ10]{LandauDeGenneMajumd2010}
A~Majumdar and A~Zarnescu.
\newblock Landau\textendash{}{D}e {G}ennes theory of nematic liquid crystals:
  the {O}seen\textendash{}{F}rank limit and beyond.
\newblock {\em Archive for Rational Mechanics and Analysis}, 196:227--280, 4
  2010.

\bibitem[NCC19]{AShortHistoryNieder2019}
SA~Niederer, KS~Campbell, and SG~Campbell.
\newblock A short history of the development of mathematical models of cardiac
  mechanics.
\newblock {\em Journal of Molecular and Cellular Cardiology}, 127:11--19, 2
  2019.

\bibitem[PAF{\etalchar{+}}21]{ModelingCardiaPiersa2021}
R~Piersanti, PC~Africa, M~Fedele, C~Vergara, L~Ded\`{e}, AF~Corno, and
  AM~Quarteroni.
\newblock Modeling cardiac muscle fibers in ventricular and atrial
  electrophysiology simulations.
\newblock {\em Computer Methods in Applied Mechanics and Engineering},
  373:113468, 1 2021.

\bibitem[QV08]{quarteroni2008numerical}
A~Quarteroni and A~Valli.
\newblock {\em Numerical approximation of partial differential equations},
  volume~23.
\newblock Springer Science \& Business Media, 2008.

\bibitem[RHGP{\etalchar{+}}22]{PhysicsInformeRuizH2022}
C~Ruiz~Herrera, T~Grandits, G~Plank, P~Perdikaris, F~Sahli~Costabal, and
  S~Pezzuto.
\newblock Physics-informed neural networks to learn cardiac fiber orientation
  from multiple electroanatomical maps.
\newblock {\em Engineering with Computers}, 38:3957--3973, 10 2022.

\bibitem[RHM{\etalchar{+}}16]{rathgeber2016firedrake}
F~Rathgeber, DA~Ham, L~Mitchell, M~Lange, F~Luporini, ATT McRae, G-T Bercea,
  GR~Markall, and PHJ Kelly.
\newblock Firedrake: automating the finite element method by composing
  abstractions.
\newblock {\em ACM Transactions on Mathematical Software (TOMS)}, 43(3):1--27,
  2016.

\bibitem[RLRB{\etalchar{+}}14]{ThermodynamicalRossi2014}
S~Rossi, T~Lassila, R~Ruiz-Baier, A~Sequeira, and AM~Quarteroni.
\newblock Thermodynamically consistent orthotropic activation model capturing
  ventricular systolic wall thickening in cardiac electromechanics.
\newblock {\em European Journal of Mechanics - A/Solids}, 48:129--142, 11 2014.

\bibitem[RMM{\etalchar{+}}12]{Pyop2AHighLRathge2012}
F~Rathgeber, GR~Markall, L~Mitchell, N~Loriant, DA~Ham, C~Bertolli, and PHJ
  Kelly.
\newblock Pyop2: A high-level framework for performance-portable simulations on
  unstructured meshes.
\newblock In {\em 2012 SC Companion: High Performance Computing, Networking,
  Storage and Analysis (SCC)}. IEEE, 11 2012.

\bibitem[RS82]{EffectOfTissuRobert1982}
DE~Roberts and AM~Scher.
\newblock Effect of tissue anisotropy on extracellular potential fields in
  canine myocardium in situ.
\newblock {\em Circulation Research}, 50:342--351, 3 1982.

\bibitem[RWS{\etalchar{+}}19]{ATechniqueForRoney2019}
CH~Roney, J~Whitaker, I~Sim, L~O'Neill, RK~Mukherjee, O~Razeghi, EJ~Vigmond,
  M~Wright, MD~O'Neill, SE~Williams, and SA~Niederer.
\newblock A technique for measuring anisotropy in atrial conduction to estimate
  conduction velocity and atrial fibre direction.
\newblock {\em Computers in Biology and Medicine}, 104:278--290, 1 2019.

\bibitem[Sch99]{MultigridMethoSchobe1999}
J~Sch\"{o}berl.
\newblock Multigrid methods for a parameter dependent problem in primal
  variables.
\newblock {\em Numerische Mathematik}, 84:97--119, 11 1999.

\bibitem[Sho85]{AnimatingRotatShoema1985}
K~Shoemake.
\newblock Animating rotation with quaternion curves.
\newblock In {\em the 12th annual conference}. ACM Press, 1985.

\bibitem[SSP{\etalchar{+}}69]{FiberOrientatiStreet1969}
DD~Streeter, HM~Spotnitz, DP~Patel, J~Ross, and EH~Sonnenblick.
\newblock Fiber orientation in the canine left ventricle during diastole and
  systole.
\newblock {\em Circulation Research}, 24:339--347, 3 1969.

\bibitem[Ste95]{stenberg1995some}
R~Stenberg.
\newblock On some techniques for approximating boundary conditions in the
  finite element method.
\newblock {\em Journal of Computational and applied Mathematics},
  63(1-3):139--148, 1995.

\bibitem[SU82]{Schoen1982regularity}
R~Schoen and K~Uhlenbeck.
\newblock A regularity theory for harmonic maps.
\newblock {\em Journal of Differential Geometry}, 17(2):307--335, 1982.

\bibitem[SXLL18]{BiologicalTissSawT2018}
TB~Saw, W~Xi, B~Ladoux, and CT~Lim.
\newblock Biological tissues as active nematic liquid crystals.
\newblock {\em Advanced Materials}, 30:1802579, 11 2018.

\bibitem[TD18]{tortora2018principles}
GJ~Tortora and BH~Derrickson.
\newblock {\em Principles of anatomy and physiology}.
\newblock John Wiley \& Sons, 2018.

\bibitem[TS17]{OrientationOfTang2017}
X~Tang and JV~Selinger.
\newblock Orientation of topological defects in 2{D} nematic liquid crystals.
\newblock {\em Soft Matter}, 13:5481--5490, 2017.

\bibitem[Vr86]{FiniteElementVerfr1986}
R~Verf\"u~rth.
\newblock Finite element approximation on incompressible navier-stokes
  equations with slip boundary condition.
\newblock {\em Numerische Mathematik}, 50:697--721, 11 1986.

\bibitem[WK14]{GeneratingFibrWong2014}
J~Wong and E~Kuhl.
\newblock Generating fibre orientation maps in human heart models using poisson
  interpolation.
\newblock {\em Computer Methods in Biomechanics and Biomedical Engineering},
  17:1217--1226, 8 2014.

\bibitem[WN99]{wright1999numerical}
S~Wright and J~Nocedal.
\newblock Numerical optimization.
\newblock {\em Springer Science}, 35(67-68):7, 1999.

\bibitem[XFW21]{AugmentedLagraXiaJ2021}
J~Xia, PE~Farrell, and F~Wechsung.
\newblock Augmented lagrangian preconditioners for the
  {O}seen\textendash{}{F}rank model of nematic and cholesteric liquid crystals.
\newblock {\em BIT Numerical Mathematics}, 61:607--644, 6 2021.

\bibitem[ZAS{\etalchar{+}}21]{AnAutomatePipZheng2021}
T~Zheng, L~Azzolin, J~S\'{a}nchez, O~D\"{o}ssel, and A~Loewe.
\newblock An automate pipeline for generating fiber orientation and region
  annotation in patient specific atrial models.
\newblock {\em Current Directions in Biomedical Engineering}, 7:136--139, 10
  2021.

\bibitem[ZBne]{HeartMuscleFiZhukovNone}
L~Zhukov and AH~Barr.
\newblock Heart-muscle fiber reconstruction from diffusion tensor {MRI}.
\newblock In {\em IEEE Visualization 2003}. IEEE, None.

\end{thebibliography}
\bibliographystyle{alpha}

%
%
%
%

\end{document}